\newtheorem{theorem}{Theorem}[section]
\newtheorem{corollary}[theorem]{Corollary}
\newtheorem{lemma}[theorem]{Lemma}
\newtheorem*{claim*}{Claim}
\newtheorem{remark}[theorem]{Remark}
\newtheorem{qn}[theorem]{Question}
\newtheorem{theorem LSV}{Theorem LSV}
\newtheorem*{theorem LSV*}{Theorem LSV}
\newtheorem{theorem MTP}{Mass Transference Principle}
\newtheorem*{theorem MTP*}{Mass Transference Principle}
\newtheorem{theorem K}{Khintchine's Theorem}
\newtheorem*{theorem K*}{Khintchine's Theorem}
\newtheorem{theorem J}{Jarn\'{\i}k's Theorem}
\newtheorem*{theorem J*}{Jarn\'{\i}k's Theorem}
\newtheorem{theorem KJ}{Khintchine--Jarn\'{\i}k Theorem}
\newtheorem*{theorem KJ*}{Khintchine--Jarn\'{\i}k Theorem}
\newtheorem{theorem BV1}{Theorem BV1}
\newtheorem*{theorem BV1*}{Theorem BV1}
\newtheorem{theorem BV2}{Theorem BV2}
\newtheorem*{theorem BV2*}{Theorem BV2}
\newtheorem{theorem KG}{Theorem KG}
\newtheorem*{theorem KG*}{Theorem KG}
\newtheorem{theorem IHKG}{Inhomogeneous Khintchine--Groshev Theorem}
\newtheorem*{theorem IHKG*}{Inhomogeneous Khintchine--Groshev Theorem}
\newtheorem{theorem DLN1}{Theorem DLN1}
\newtheorem*{theorem DLN1*}{Theorem DLN1}
\newtheorem{theorem DLN2}{Theorem DLN2}
\newtheorem*{theorem DLN2*}{Theorem DLN2}
\newtheorem{theorem DLN3}{Theorem DLN3}
\newtheorem*{theorem DLN3*}{Theorem DLN3}
\newtheorem{theorem S}{Theorem S}
\newtheorem*{theorem S*}{Theorem S}
\newtheorem*{theorem L*}{Previous results for homogeneous metric Diophantine approximation}
\newtheorem*{theorem LL*}{Previous results for inhomogeneous metric Diophantine approximation}
\newtheorem*{theorem LLL*}{Previous results for (inhomogeneous) multiplicative metric Diophantine approximation}
\numberwithin{equation}{section}
\renewcommand{\le}{\leq}
\renewcommand{\ge}{\geq}
\def\le{\leqslant} \def\ge{\geqslant}
\def \leq {\le}
\def \geq {\ge}
\newcommand{\Addresses}{{% additional braces for segregating \footnotesize
		\bigskip
		\footnotesize
		
		%D.~Allen, 
		%\textsc{School of Mathematics, University of Bristol, Fry Building 
		%Woodland Road, Bristol, BS8 1UG, and the Heilbronn Institute for Mathematical Research, Bristol, UK}\par\nopagebreak
		%\textit{E-mail address:} \texttt{demi.allen@bristol.ac.uk}
		
		%\medskip
		
		%S.~Chow, 
		%\textsc{Mathematics Institute, Zeeman Building, University of Warwick, Coventry, CV4 7AL, UK}\par\nopagebreak
		%\textit{E-mail address:} \texttt{sam.chow@warwick.ac.uk}
		
		\medskip
		
		H.~Yu, 
		\textsc{Department of Pure Mathematics and Mathematical Statistics, Centre for Mathematical Sciences, Cambridge, CB3 0WB, UK}\par\nopagebreak
		\textit{E-mail address:} \texttt{hy351@maths.cam.ac.uk}
}}
\title{On the metric theory of inhomogeneous Diophantine approximation: An Erd\H{o}s-Vaaler type result}
\author{ Han Yu \footnote{Supported by the European Research Council (ERC) under the European Union’s Horizon 2020 research and innovation programme (grant agreement No. 803711), and indirectly by Corpus Christi College, Cambridge}}
\date{}
\begin{document}
	\maketitle
	\begin{abstract}
		In 1958, Sz\"{u}sz proved an inhomogeneous version of Khintchine's theorem on Diophantine approximation.  Sz\"{u}sz's theorem states that for any non-increasing approximation function $\psi:\mathbb{N}\to (0,1/2)$ with $\sum_q  \psi(q)=\infty$ and any number $\gamma,$ the following set
		\[
		W(\psi,\gamma)=\{x\in [0,1]: |qx-p-\gamma|< \psi(q) \text{ for infinitely many } q,p\in\mathbb{N}\}
		\]
		has full Lebesgue measure. Since then, there are very few results in relaxing the monotonicity condition. In this paper, we show that if $\gamma$ is can not be approximate by rational numbers too well, then the monotonicity condition can be replaced by the upper bound condition $\psi(q)=O((q(\log\log q)^2)^{-1}).$ In particular, this covers the case when $\gamma$ is not Liouville, for example $\pi,e,\ln 2, \sqrt{2}.$ In general, if $\gamma$ is irrational, $\psi(q)=O(q^{-1}(\log\log q)^{-2})$ and in addition,
		\[
		\left(\liminf_{Q\to\infty} \sum_{q=Q}^{Q^{(\log Q)^{1/8} }}\psi(q)\right)=\infty,
		\]
		then $W(\psi,\gamma)$ has full Lebesgue measure. Our proof is based on a quantitative study of the discrepancy for irrational rotations. 
	\end{abstract}
	\noindent{\small 2010 {\it Mathematics Subject Classification}\/: Primary:11J83,11J20,11K60.}
	%}\bigskip
	
	\noindent{\small{\it Keywords and phrases}\/: Inhomogeneous Diophantine approximation, Metric number theory}
	
	\section{Introduction}
	\subsection{Background}
	In this paper, we study inhomogeneous metric Diophantine approximation. Let $\gamma\in\mathbb{R}$ and $\psi:\mathbb{N}\to \mathbb{R}^+$ be a function (approximation function). We are interested in the following set:
	\[
	W(\psi,\gamma)=\{x\in [0,1]: \|qx-\gamma\|< \psi(q) \text{ for infinitely many } q\in\mathbb{N}\}.
	\]
	In the case when $\gamma=0,$ the study of $W(\psi,0)$ is referred to as classical (or homogeneous) Diophantine approximation. Now, we have a complete understanding of the Lebesgue measure of $W(\psi,0).$
	
	\begin{theorem L*}\footnote{This list of results is by no means complete.}
		The set $W(\psi,0)$ has full Lebesgue measure if:
		\begin{itemize}
			\item {Khintchine's theorem \cite{Khintchine}}: $\psi$ is non-increasing and $\sum_q \psi(q)=\infty.$
			\item {Duffin-Schaeffer's theorem \cite{DS}}: $\sum_q \psi(q)=\infty$ and
			\[
			\limsup_{Q\to\infty} \frac{\sum_{q\leq Q} \psi(q)\phi(q)/q}{\sum_{q\leq Q} \psi(q)}>0,
			\]
			where $\phi(.)$ is the Euler Phi function.
			\item {Erd\H{o}s-Vaaler's theorem \cite{Vaaler}}: $\sum_q \psi(q)\phi(q)/q=\infty$ and $\psi(q)=O(q^{-1}).$ 
			\item {Extra divergence \cite{BHHV}, \cite{ALMTZ}}: $\sum_q \frac{\psi(q)\phi(q)}{q \log^C q}=\infty$ for a number $C>0.$
			\item {Final result \cite{KM2019}}: $\sum_q \psi(q)\phi(q)/q=\infty.$ This result settled the challenging Duffin-Schaeffer conjecture posed in \cite{DS}. This result also has an easy-to-prove convergence part: if $\sum_q \psi(q)\phi(q)/q<\infty$ then for Lebesgue almost all $x\in [0,1]$ there are at most finitely many coprime pairs $(p,q)$ such that $|qx-p|<\psi(q).$
		\end{itemize}
	\end{theorem L*}
	If $\gamma\neq 0,$ the study of $W(\psi,\gamma)$ is referred to as inhomogeneous metric Diophantine approximation. Much less is known for the inhomogeneous case. We list some of them here.
	\begin{theorem LL*}\footnote{This list is not complete.}
		\begin{itemize}
			\item {Sz\"{u}sz's theorem}: If $\psi$ is non-increasing and $\sum_q \psi(q)=\infty$ then $W(\psi,\gamma)$ has full Lebesgue measure for all real numbers $\gamma.$
			\item {Ram\'{i}rez's examples}: Without the monotonicity of the approximation function $\psi$, the condition $\sum_q \psi(q)=\infty$ alone cannot ensure that $W(\psi,\gamma)$ has positive Lebesgue measure. 
			\item {Extra divergence \cite{Yu}}: For each $\epsilon>0,$ if $\sum_{q} q^{-\epsilon} \psi(q)=\infty,$ then for all numbers $\gamma,$ $W(\psi, \gamma)$ has full Lebesgue measure.
		\end{itemize}
	\end{theorem LL*}
	We can compare the above list to that of the homogeneous case.  Sz\"{u}sz's theorem takes the place of  Khintchine's theorem. The extra divergence result in \cite{Yu} is a rather weaker companion of the extra divergence result in \cite{BHHV}, \cite{ALMTZ}. Other than those, there are no further inhomogeneous analogues of the homogeneous results. In fact, the new technical difficulty for studying inhomogeneous Diophantine approximation can be partially seen when one attempts to directly apply Duffin-Schaeffer's argument in \cite{DS} or Vaaler's argument in \cite{Vaaler} to the inhomogeneous case.  We shall leave those fun tasks to the reader (hint: an interval of length less than one can still contain an integer).  Thus, the inhomogeneous shift $\gamma$ really creates something substantially new. The ultimate goal for the inhomogeneous case is to obtain a result as in \cite{KM2019}. The idea of the arguments in \cite{KM2019} partially feature the idea in \cite{Vaaler}. In this way, one might think that an inhomogeneous version of Erd\H{o}s-Vaaler's theorem can provide us with some new lights. This is in fact the main motivation for writing this paper.
	
	One application of  inhomogeneous Diophantine approximation is to understand multiplicative Diophantine approximation. Let $\psi$ be an approximation function and let $\gamma_1,\gamma_2$ be real numbers. We consider the set
	\[
	W(\psi,\gamma_1,\gamma_2)=\{(x,y)\in [0,1]^2: \|qx-\gamma_1\|\|qy-\gamma_2\|<\psi(q)\text{ infinitely often}\}.
	\]
	A famous conjecture of Littlewood states that for an arbitrary $\epsilon>0,$ consider the approximation function $\psi(q)=\epsilon/q,$ the set $W(\psi,0,0)$ contains all pairs of irrational numbers $(x,y)\in [0,1]^2.$ On the metric aspect, what is interesting is to find sufficient conditions on numbers $\gamma_1,\beta,\gamma_2$ such that $W(\psi,\gamma_1,\gamma_2)\cap \{y=\beta\}$ has full Lebesgue measure. For convenience, we consider the following set,
	\[
	W(\psi,\beta,\gamma_1,\gamma_2)=\{x\in [0,1]: \|qx-\gamma_1\|\|q\beta-\gamma_2\|<\psi(q)\text{ infinitely often}\}.
	\]
	Recently, there have been many significant progresses in understanding the Lebesgue measure of $W(\psi,\beta,0,\gamma_2).$
	\begin{theorem}[\cite{BHV},\cite{KM2019},\cite{C18}]
		Let $\psi$ be a monotonic approximation function. If $\sum_q \psi(q)\log q=\infty,$ then for each irrational non-Liouville number $\beta,$ real number $\gamma_2,$ the set $W(\psi,\beta,0,\gamma_2)$ has full Lebesgue measure.
	\end{theorem}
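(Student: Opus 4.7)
My plan is to reduce the two-variable inhomogeneous multiplicative problem to a one-dimensional homogeneous problem, and then invoke the Koukoulopoulos--Maynard theorem listed in the introduction. Set
\[
\Psi(q)=\min\!\left(\frac{\psi(q)}{\|q\beta-\gamma_2\|},\;\tfrac{1}{2}\right),
\]
which is well-defined for all but at most one $q$ since $\beta$ is irrational. The condition $\|qx\|\cdot\|q\beta-\gamma_2\|<\psi(q)$ is equivalent to $\|qx\|<\Psi(q)$ whenever $\Psi(q)<1/2$, and is automatically satisfied otherwise since $\|qx\|\leq 1/2$. Thus $W(\psi,\beta,0,\gamma_2)=W(\Psi,0)$. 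If $\|q\beta-\gamma_2\|<2\psi(q)$ holds for infinitely many $q$ the conclusion is immediate; otherwise $\Psi(q)=\psi(q)/\|q\beta-\gamma_2\|$ for all large $q$, and by Koukoulopoulos--Maynard it suffices to prove
\[
\sum_q \frac{\psi(q)\,\phi(q)}{q\,\|q\beta-\gamma_2\|}=\infty. \tag{$\star$}
\]

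Since $\beta$ is non-Liouville its irrationality exponent $\tau(\beta)$ is finite, so the Weyl sums satisfy $\bigl|\sum_{q\leq N}e(hq\beta)\bigr|\ll\min(N,h^{\tau-1+o(1)})$. Plugging this into the Erd\H{o}s--Tur\'an inequality yields a power-saving discrepancy bound $D_N(\{q\beta-\gamma_2\}_{q\leq N})\ll N^{-\delta}$ for some $\delta=\delta(\beta)>0$. Applying Koksma's inequality to the truncated reciprocal $f_t(x)=\min(1/\|x\|,1/t)$ with $t\asymp N^{-\delta/2}$ (integral $\asymp\log(1/t)$ and total variation $O(1/t)$) produces the dyadic estimate
\[
\sum_{q=N}^{2N}\frac{1}{\|q\beta-\gamma_2\|}\;\asymp\;N\log N.
\]

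To insert the arithmetic factor using $\phi(q)/q=\sum_{d\mid q}\mu(d)/d$, I would then write
\[
\sum_{q=N}^{2N}\frac{\phi(q)}{q\|q\beta-\gamma_2\|}=\sum_{d\geq 1}\frac{\mu(d)}{d}\sum_{N/d\leq m\leq 2N/d}\frac{1}{\|m(d\beta)-\gamma_2\|}.
\]
Because $d\beta$ has the same irrationality exponent as $\beta$, the previous discrepancy argument applies uniformly to each inner sum for $d$ in the principal range $d\leq D:=N^{1-\eta}$ (with $\eta>0$ small), producing a main term of order $(cN/d)\log(N/d)$ with a uniform relative error $O((N/d)^{-\delta'})$. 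Summing the principal terms against $\mu(d)/d$ assembles the Euler product $\sum_d \mu(d)/d^2=6/\pi^2$ and yields a contribution of order $N\log N$; the accumulated discrepancy errors sum to $O(N)$, and the tail $d>D$ is bounded crudely using standard upper estimates on sums of the form $\sum 1/\|m\theta-\gamma\|$ for Diophantine rotations. Consequently
\[
\sum_{q=N}^{2N}\frac{\phi(q)}{q\|q\beta-\gamma_2\|}\;\gtrsim\;N\log N.
\]
Monotonicity of $\psi$ then gives $\sum_{q=N}^{2N}\psi(q)\phi(q)/(q\|q\beta-\gamma_2\|)\gtrsim\psi(2N)\,N\log N$, and summing dyadically over $N=2^k$ produces $\sum_k\psi(2^{k+1})\,2^k\,k$, which by the standard dyadic comparison for monotone $\psi$ has the same order of magnitude as $\sum_q\psi(q)\log q$; the latter diverges by hypothesis, establishing $(\star)$.

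The main obstacle is the third paragraph. One must obtain the discrepancy bound for the rotations $m(d\beta)-\gamma_2$ \emph{uniformly} in $d\leq D$, with an exponent $\delta'$ strong enough that both the tail $d>D$ and the accumulated error of size $O(N)$ are absorbed without erasing the logarithmic main term of size $N\log N$. This uniformity is where the non-Liouville hypothesis on $\beta$ is indispensable: it supplies a single power-saving $\delta$ that works simultaneously for every nonzero integer multiple $d\beta$.
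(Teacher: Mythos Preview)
The paper does not give its own proof of this theorem; it is recorded as a cited result, with the remark immediately following it that \cite{BHV} showed the statement follows from the Duffin--Schaeffer conjecture, subsequently established in \cite{KM2019} (and that \cite{C18} gave an independent proof). Your strategy---rewriting $W(\psi,\beta,0,\gamma_2)$ as the homogeneous set $W(\Psi,0)$ with $\Psi(q)=\psi(q)/\|q\beta-\gamma_2\|$ and then invoking Koukoulopoulos--Maynard---is exactly this \cite{BHV}$+$\cite{KM2019} route, so at the level of overall approach you agree with what the paper cites.

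There is, however, a real gap in your third paragraph. The M\"obius identity is correct, but because $\mu(d)$ changes sign, a \emph{lower} bound for $\sum_{q=N}^{2N}\phi(q)/(q\|q\beta-\gamma_2\|)$ requires \emph{two-sided} asymptotics for each inner sum $\sum_{m}1/\|m(d\beta)-\gamma_2\|$, not merely the lower bound that Koksma with the truncated $f_t\le 1/\|x\|$ yields. The matching upper bound is not a discrepancy statement at all: $1/\|x\|$ has infinite variation, and a single $m$ with $\|m(d\beta)-\gamma_2\|$ extremely small can swamp the putative main term $(N/d)\log N$. To control this uniformly in $d$ one needs the gap structure of $\{m(d\beta)\}_{m\le M}$ (three-distance together with $\|j\beta\|\gg j^{-\sigma}$), giving a minimum spacing $\gg (Md)^{-\sigma}$ and hence a bound on the sum of reciprocals of the small values. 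Alternatively one can sidestep the sign issue entirely by replacing full M\"obius with a one-sided sieve minorant for $\phi(q)/q$ (or for the indicator of $q$ being free of small prime factors), which keeps all coefficients non-negative. Your closing paragraph names uniformity in $d$ as the crux, but the genuine obstruction is the upper bound on the inner sums, which your Koksma argument does not address.
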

	
	In \cite{BHV} it was proved that the above result follows from the Duffin-Schaeffer conjecture which was later proved in \cite{KM2019}. However, before the appearance of \cite{KM2019}, Chow \cite{C18} proved the above result without relying on the Duffin-Schaeffer conjecture.
	
	\subsection{Results in this paper}
	We will focus on relaxing the monotonicity condition in Sz\"{u}sz's theorem. By the results in \cite{Ramirez}, the monotonicity condition cannot be dropped without introducing other conditions. As we mentioned in the previous section, the ultimate goal in this direction is to find a necessary and sufficient condition on $\psi,\gamma$ for $W(\psi,\gamma)$ to have full Lebesgue measure as in the Duffin-Schaeffer conjecture. This is currently beyond our reach. At this stage, we pose the following question which could be challenging.\footnote{The work in \cite{KM2019} certainly also sheds some lights on the inhomogeneous problem but does not make it any less challenging.}
	\begin{qn}
		Prove or disprove: Let $\psi$ be an approximation function and $\sum_{q} \psi(q)\phi(q)/q=\infty.$ Then for each real number $\gamma,$ the Lebesgue measure of $W(\psi,\gamma)$ is one.
	\end{qn}
	
	Towards this direction, we will provide the following Erd\H{o}s-Vaaler type result for inhomogeneous Diophantine approximation. The notion of tamely/wildly Liouville numbers will be introduced in Section \ref{pre}. To have some ideas, all non-Liouville numbers are tamely Liouville. There are tamely Liouville numbers which are Liouville.
	
	\begin{theorem}[Main theorem I]\label{Main1}
		Let $\psi(q)=O((q\log\log^2 q)^{-1})$ and $\sum_{q} \psi(q)=\infty.$ Then for each tamely Liouville number $\gamma,$ the Lebesgue measure of $W(\psi,\gamma)$ is one.
	\end{theorem}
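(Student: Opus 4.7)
My plan is to prove Theorem \ref{Main1} by a variance (``quasi-independence on average'') method tailored to the inhomogeneous setting. For each $q \in \mathbb{N}$, set $A_q = \{x \in [0,1] : \|qx - \gamma\| < \psi(q)\}$, so $|A_q| = 2\psi(q)$ and $W(\psi,\gamma) = \limsup_q A_q$. Let $S_Q(x) = \sum_{q=1}^{Q} \mathbf{1}_{A_q}(x)$, whose mean $2\sum_{q \leq Q}\psi(q)$ diverges by hypothesis. By the Chung--Erd\H{o}s inequality, the theorem reduces to the quasi-independence on average estimate
\[
\sum_{q,q' \leq Q} |A_q \cap A_{q'}| \;\leq\; (1+o(1))\cdot 4\Bigl(\sum_{q \leq Q}\psi(q)\Bigr)^2, \qquad Q \to \infty.
\]

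Next, I would Fourier-expand each indicator on $\mathbb{T}$. A direct computation shows that $\widehat{\mathbf{1}_{A_q}}(n)$ vanishes unless $q \mid n$; for $n=qm$ with $m\neq 0$ one has $\widehat{\mathbf{1}_{A_q}}(qm) = \frac{\sin(2\pi m\psi(q))}{\pi m}\, e^{-2\pi i m \gamma}$, while $\widehat{\mathbf{1}_{A_q}}(0)=2\psi(q)$. By Parseval, writing $d=\gcd(q,q')$, $\tilde q=q/d$, $\tilde q'=q'/d$, one obtains $|A_q\cap A_{q'}|=4\psi(q)\psi(q')+E(q,q')$ where
\[
E(q,q') = \sum_{m\neq 0} \frac{\sin(2\pi m \tilde q' \psi(q))\,\sin(2\pi m \tilde q \psi(q'))}{\pi^{2} m^{2}\,\tilde q\,\tilde q'}\; e^{2\pi i m(\tilde q-\tilde q')\gamma}.
\]
Crucially, when $q\neq q'$ the exponent $(\tilde q-\tilde q')\gamma$ is irrational (as $\gamma$ is irrational), so the phase contributes genuine oscillation.

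The heart of the argument is estimating $\sum_{q,q'\leq Q} E(q,q')$. The diagonal contribution is $\sum_{q\leq Q}E(q,q)=O\bigl(\sum_{q\leq Q}\psi(q)\bigr)$, which is negligible since $\sum\psi(q)\to\infty$. For off-diagonal pairs I would group $(q,q')$ according to $d$ and $k:=\tilde q-\tilde q'\neq 0$, then handle the inner sum over $m$ by Abel summation combined with the elementary bound $|\sum_{m=1}^{M}e^{2\pi i m k\gamma}| \ll \min(M,\|k\gamma\|^{-1})$. The uniform upper bound $\psi(q)=O((q(\log\log q)^{2})^{-1})$ allows me to truncate the $m$-sum at a scale where the sine factors are well controlled: for $m$ below the truncation, the two sine factors are linear in their arguments and combine with the oscillation $e^{2\pi i m k\gamma}$ to yield a Dirichlet-kernel type bound; above the truncation, the $1/m^2$ decay suffices absolutely. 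This reduces the off-diagonal sum to a weighted sum of $\|k\gamma\|^{-1}$ over $k$ in a controlled range, which is precisely the object that the tamely Liouville hypothesis is designed to control via an Erd\H{o}s--Tur\'an / discrepancy estimate for the rotation $k\mapsto k\gamma$.

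The main obstacle will be making this truncation quantitatively balance the three competing effects: (i) the tail $m\gg M$ is tamed by the $(\log\log q)^2$ factor in the bound on $\psi$; (ii) the main range $1\leq m\leq M$ is handled by converting exponential-sum cancellation into a discrepancy estimate for $\{k\gamma\}$; but (iii) one must avoid ``Liouville traps'', namely values of $k=\tilde q-\tilde q'$ at which $\|k\gamma\|$ is anomalously small. Tamely Liouville is the precise arithmetic condition that rules out too many such resonances in the relevant range, and weaving this through the double summation over $(q,q')\leq Q\times Q$ — while keeping the error below $o\bigl((\sum\psi(q))^2\bigr)$ — is where the double-logarithmic loss in the admissible size of $\psi$ enters the argument. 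This final discrepancy step, rather than the Fourier bookkeeping, is the genuinely new ingredient compared to the homogeneous Erd\H{o}s--Vaaler theorem.
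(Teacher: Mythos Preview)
Your overall architecture---quasi-independence on average, with the tamely Liouville hypothesis entering through a discrepancy bound for the rotation $k\mapsto k\gamma$---matches the paper's. But the route to the off-diagonal estimate is genuinely different, and your sketch skips the step that carries the weight.

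The paper does \emph{not} Fourier-expand. It bounds $|A_q\cap A_{q'}|$ directly by a lattice-point count (Lemma~\ref{master}), splitting according to whether $\Delta(q,q')=q\psi(q')+q'\psi(q)$ is small or large compared to $\gcd(q,q')$. In the delicate regime this produces a factor $\chi_{B(0,\Delta(q,q')/\gcd(q,q'))}\bigl(\{\gamma(q'-q)/\gcd(q,q')\}\bigr)$. For fixed $q$, the paper then groups $q'$ by $r=\gcd(q,q')$ and by dyadic range $q'/q\in[2^{-k-1},2^{-k})$, and the inner count becomes: how many multiples $rs$ of $r$ have $\{s\gamma\}$ in a short interval. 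Erd\H{o}s--Tur\'an (via the tamely Liouville bound on $\sigma(Q)$) handles this, and the divisor sum over $r\mid q$ leaves the arithmetic loss
\[
F(q)=\sum_{r\mid q}\frac{\log r}{r}.
\]
The paper then proves, by a $K$-th moment argument with $K\asymp\log\log Q$, that $F(q)\le C(\log\log q)^2$ outside a set on which $\sum\psi$ converges, so one may restrict $\psi$ to ``good'' $q$. On those $q$ the factor $(\log\log q)^{-2}$ in the hypothesis on $\psi$ exactly cancels $F(q)$, giving $\sum_{q'<q}|A_q\cap A_{q'}|\le 4(1+C_0/2H)\psi(q)\sum_{q'\le q}\psi(q')+O(\psi(q))$ and hence full measure via Lemma~\ref{Borel} with $H\to\infty$. (A preliminary reduction, Section~\ref{modify}, restricts to $q$ with $\Omega(q)\le(\log q)^{1/2+\eps}$ so that $d(q)$ is controlled; this is needed to absorb the error term from the discrepancy estimate.)

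Your Fourier plan is plausible, but the proposal does not show where the analogue of $F(q)$ arises or how $(\log\log q)^{-2}$ absorbs it. Concretely: grouping by $d=\gcd(q,q')$ and $k=\tilde q-\tilde q'$ does \emph{not} decouple the summand, because both sine factors in $E(q,q')$ depend on $(\tilde q,\tilde q')$ individually, not merely on $k$. After your truncation the main range gives roughly $\psi(q)\psi(q')\min(M,\|k\gamma\|^{-1})$, but summing this over all pairs with a given $d$ still leaves a divisor-type sum over $d$ that is the source of the $(\log\log q)^2$ loss; your sketch attributes the use of the $(\log\log q)^{-2}$ factor to the Fourier tail $m\gg M$, which is not where the real loss lives. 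Moreover, the tail bound $\sum_{|m|>M}1/(m^2\tilde q\tilde q')=O(d^2/(Mqq'))$ that you implicitly use cannot be compared to $\psi(q)\psi(q')$ without a lower bound on $\psi$, which you do not have. A correct Fourier proof would still need to isolate a per-$q$ arithmetic quantity like $F(q)$ and prove it is $O((\log\log q)^2)$ on a set of full $\psi$-weight; that step is the heart of the argument and is absent from your proposal.
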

	The above theorem holds for example when $\gamma=e,\pi$, an algebraic irrational number or the natural logarithm of a non-zero algebraic number. To say that $\gamma$ is tamely Liouville is essentially syaing that
	\[
	\|q\gamma\|\gg q^{-(\log q)^{1/4}}.
	\]
	 We provide more precise details in Section \ref{pre}. Here the tamely Liouville condition can be weakened if we have more information of the support of $\psi.$ For example, if $\psi(q)\neq 0$ only for $q$ with $d(q)=O(\log q)$ then we only need to require that
	 \[
	 \|q\gamma\|\gg q^{-(\log q)^{1/2}}.
	 \]
	\begin{comment}
	We can add some extra conditions to ensure that $W(\psi,\gamma)$ to have full Lebesgue measure. 
	\begin{theorem}\label{add}
	Under the condition in Theorem \ref{Main1}, suppose further that there are infinitely many integers $M>1$ with
	\[
	\sum_{q\geq 1} \psi(qM)=\infty.
	\]
	Then $W(\psi,\gamma)$ has full Lebesgue measure.
	\end{theorem}
	\end{comment}
	An application of Theorem \ref{Main1} yields the following. 
	\begin{corollary}\label{MUL1}
		Let $\gamma_1$ be an irrational number which is tamely  Liouville. Let $\gamma_2$ be a real number and $\beta$ be an irrational number. Suppose that $\psi(q)=O((q\log q(\log\log q)^2)^{-1})$ and
		\[
		\sum_{q: \|q\beta-\gamma_2\|\geq (\log q)^{-1}} \frac{\psi(q)}{\|q\beta-\gamma_2\|}=\infty.\tag{D}
		\]  Then the set $W(\psi,\beta,\gamma_1,\gamma_2)$ has Lebesgue measure is one.
		
	\end{corollary}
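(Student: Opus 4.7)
The plan is to reduce Corollary \ref{MUL1} to Theorem \ref{Main1} by absorbing the $\|q\beta-\gamma_2\|$ factor into a redefined one-dimensional approximation function. Specifically, I would define
\[
\tilde\psi(q) = \begin{cases} \psi(q)/\|q\beta-\gamma_2\| & \text{if } \|q\beta-\gamma_2\| \geq (\log q)^{-1}, \\ 0 & \text{otherwise.} \end{cases}
\]
By construction, any $x$ with $\|qx-\gamma_1\| < \tilde\psi(q)$ for some $q$ in the support of $\tilde\psi$ automatically satisfies $\|qx-\gamma_1\|\cdot\|q\beta-\gamma_2\| < \psi(q)$. Hence $W(\tilde\psi,\gamma_1) \subseteq W(\psi,\beta,\gamma_1,\gamma_2)$, and it suffices to show that $W(\tilde\psi,\gamma_1)$ has full Lebesgue measure.

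Next I would verify that $\tilde\psi$ satisfies the hypotheses of Theorem \ref{Main1}. The divergence $\sum_q \tilde\psi(q) = \infty$ is precisely hypothesis (D). For the upper bound, whenever $q$ lies in the support of $\tilde\psi$ the cutoff forces $1/\|q\beta-\gamma_2\| \leq \log q$, so combined with the assumption $\psi(q) = O((q\log q(\log\log q)^2)^{-1})$ we get
\[
\tilde\psi(q) \;\leq\; \psi(q)\log q \;=\; O\!\left(\frac{1}{q(\log\log q)^2}\right),
\]
which is exactly the growth rate allowed by Theorem \ref{Main1}. Since $\gamma_1$ is tamely Liouville, applying Theorem \ref{Main1} to $\tilde\psi$ yields that $W(\tilde\psi,\gamma_1)$ has full Lebesgue measure, and the inclusion above completes the proof.

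There is no real obstacle here; the role of the extra $\log q$ factor in the bound on $\psi$ in Corollary \ref{MUL1} (compared with Theorem \ref{Main1}) is precisely to compensate for the $\log q$ that the cutoff $\|q\beta-\gamma_2\| \geq (\log q)^{-1}$ introduces when passing from $\psi$ to $\tilde\psi$. The only thing worth double-checking is that the $q$ excluded by the cutoff contribute negligibly to the divergence in (D), but this is trivially built into the way (D) is stated: the sum is explicitly restricted to the surviving $q$, so the reduction is clean.
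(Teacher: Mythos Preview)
Your proposal is correct and is essentially identical to the paper's own proof: the paper defines the same auxiliary function (called $\psi'$ there) supported on $B=\{q:\|q\beta-\gamma_2\|\ge(\log q)^{-1}\}$, notes the inclusion $W(\psi',\gamma_1)\subset W(\psi,\beta,\gamma_1,\gamma_2)$, checks the same $O((q(\log\log q)^2)^{-1})$ bound, and invokes Theorem~\ref{Main1}.
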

	
	The divergence condition (D) is sometimes not easy to check as it also depends on the choice of $\beta$ and $\gamma_2.$ In order to be sure that this condition is possible to be satisfied, we provide an example in Section \ref{Proof}. In fact for non-Liouville $\beta$ and tamely Liouville $\gamma_1$, it is possible the check that $|W(\psi,\beta,\gamma_1,\gamma_2)|=1$ for monotonic $\psi$ with $\sum_q \psi(q)\log\log q=\infty$. We do not prove this result here as we wish to prove a stronger result in a forthcoming project.
	
	Several remarks are in order.
	\begin{comment}
	\begin{remark}
	For inhomogeneous approximation, we cannot improve the positive measure result to full measure result directly as in the homogeneous case where Cassels-Gallagher's zero-one law \cite{C50}, \cite{Gallagher}  is at hand. We do believe that $4^{-1}$ can be improved to be $1$ without introducing any other conditions.
	\end{remark}
	\end{comment}
	\begin{remark}
		We need $\psi(q)=O((q\log\log^2 q)^{-1})$ which is a stronger condition than the Erd\H{o}s-Vaaler condition that $\psi(q)=O(1/q)$. We do believe that $\psi(q)=O(q^{-1})$ would be sufficient in Theorem \ref{Main1}. 
	\end{remark}
	The requirement that $\gamma$ needs to be tamely Liouville comes from the method we will be using. In fact, we need this condition for making  Estimate (III) in the proof of Lemma \ref{Sum Estimate} valid. It is possible to pose weaker condition on $\gamma$ in terms of its Diophantine property. We cannot completely remove this condition yet. In this direction, we supply the following result to deal with the case when $\gamma$ is wildly Liouville.
	
	In what follows, we call an approximation function $\psi$ to be \emph{weakly extra divergent} if
	\begin{align*}
	\left(\liminf_{Q\to\infty} \sum_{q=Q}^{Q^{(\log Q)^{1/8}}} \psi(q)\right)=\infty.\tag{WEx}
	\end{align*}
	It is possible to check that $\psi(q)=1/(q\log q)$ is weakly extra divergent.
	\begin{theorem}[Main theorem II]\label{Main2}
		Let $\psi$ be weakly extra divergent and $\psi(q)=O((q\log\log^2 q)^{-1}).$  Then for each wildly Liouville number $\gamma,$ the Lebesgue measure of $W(\psi,\gamma)$ is one.
	\end{theorem}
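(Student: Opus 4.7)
The plan is to follow the same framework as the proof of Theorem~\ref{Main1}, departing from it only when the wildly Liouville property of $\gamma$ forces us to restrict to carefully chosen windows governed by the continued fraction of $\gamma$. Define $A_q = \{x \in [0,1] : \|qx - \gamma\| < \psi(q)\}$, so that $W(\psi,\gamma) = \limsup_q A_q$ with $|A_q| \leq 2\psi(q)$. The main tool is the Chung--Erd\H{o}s inequality
$$\Big|\bigcup_{q \in \mathcal{Q}} A_q\Big| \;\geq\; \frac{\big(\sum_{q \in \mathcal{Q}} |A_q|\big)^{2}}{\sum_{q, q' \in \mathcal{Q}} |A_q \cap A_{q'}|}.$$
As in Theorem~\ref{Main1}, the task reduces to proving $\bigl|\bigcup_{q \in \mathcal{Q}_k} A_q\bigr| \geq c$ for an absolute constant $c > 0$ along an unbounded sequence of windows $\mathcal{Q}_k$; a standard scaling/localisation argument then upgrades positive measure to full measure.

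Since $\gamma$ is wildly Liouville, there is an infinite sequence of continued fraction convergent denominators $r_{n_k}$ satisfying $r_{n_k+1} \geq r_{n_k}^{(\log r_{n_k})^{1/4}}$. For each such $k$, set $Q_k = r_{n_k}$ and
$$\mathcal{Q}_k \;=\; \mathbb{N} \cap [Q_k,\; Q_k^{(\log Q_k)^{1/8}}].$$
For $k$ large enough, $Q_k^{(\log Q_k)^{1/8}} < r_{n_k+1}$, so the window $\mathcal{Q}_k$ sits entirely inside the gap $(r_{n_k}, r_{n_k+1})$ between two successive convergent denominators. Restricted to this window, the Ostrowski/three--distance description of the rotation $\{q\gamma\}$ is controlled by convergents only up to $r_{n_k}$, and one never encounters an integer $m \leq Q_k^{(\log Q_k)^{1/8}}$ with $\|m\gamma\|$ anomalously small beyond what is allowed in the tamely Liouville regime.

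Because of this, the overlap estimate driving Lemma~\ref{Sum Estimate} remains valid on $\mathcal{Q}_k$, and yields
$$\sum_{q, q' \in \mathcal{Q}_k} |A_q \cap A_{q'}| \;\leq\; C \Big(\sum_{q \in \mathcal{Q}_k} \psi(q)\Big)^{2} + C \sum_{q \in \mathcal{Q}_k} \psi(q),$$
for an absolute constant $C$, where the upper bound $\psi(q) = O((q(\log\log q)^{2})^{-1})$ is used to absorb the Erd\H{o}s--Tur\'an error terms. By the weakly extra divergent hypothesis $\sum_{q \in \mathcal{Q}_k} \psi(q) \to \infty$, so the first term dominates and Chung--Erd\H{o}s gives $\bigl|\bigcup_{q \in \mathcal{Q}_k} A_q\bigr| \geq 1/(2C)$ for all large $k$. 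Hence $|W(\psi,\gamma)| \geq 1/(2C) > 0$, and full measure follows as in Theorem~\ref{Main1}.

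The principal difficulty is the window--restricted overlap estimate. In the proof of Theorem~\ref{Main1} one uses $\|q\gamma\| \gg q^{-(\log q)^{1/4}}$ uniformly in $q$, which fails for wildly Liouville $\gamma$. The point of the choice $\mathcal{Q}_k = \mathbb{N} \cap [r_{n_k}, r_{n_k}^{(\log r_{n_k})^{1/8}}]$ is that any difference $m = q - q'$ with $q, q' \in \mathcal{Q}_k$ satisfies $|m| \leq r_{n_k}^{(\log r_{n_k})^{1/8}} < r_{n_k+1}$, so $\|m\gamma\|$ is governed entirely by convergents up to $r_{n_k}$, and the tamely Liouville--type lower bound $\|m\gamma\| \gg m^{-(\log m)^{1/4}}$ does hold throughout $\mathcal{Q}_k$. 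The exponent $1/8$ in the definition of weakly extra divergent is calibrated precisely to leave a safe polynomial margin against the exponent $1/4$ defining wildly Liouville; checking that this margin is uniform in $k$ and suffices to reuse the Erd\H{o}s--Tur\'an machinery of Lemma~\ref{Sum Estimate} verbatim is the core technical step.
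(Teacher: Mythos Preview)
Your central claim---that on the window $\mathcal{Q}_k=[r_{n_k},\,r_{n_k}^{(\log r_{n_k})^{1/8}}]$ the tamely Liouville lower bound $\|m\gamma\|\gg m^{-(\log m)^{1/4}}$ holds for every difference $m=q-q'$---is false, and this breaks the argument. Differences $m$ range over all integers up to $r_{n_k}^{(\log r_{n_k})^{1/8}}-r_{n_k}$, so in particular $m=r_{n_k}$ itself (and all its small multiples) occurs. But $\|r_{n_k}\gamma\|<1/r_{n_k+1}$, and the wildly Liouville hypothesis places \emph{no upper bound} on $r_{n_k+1}$: it could be $r_{n_k}^{(\log r_{n_k})^{100}}$ or larger. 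Hence $\|r_{n_k}\gamma\|$ may be far smaller than $r_{n_k}^{-(\log r_{n_k})^{1/4}}$, and the Erd\H{o}s--Tur\'an discrepancy machinery of Lemma~\ref{Sum Estimate} collapses: any choice of the harmonic cutoff $H\ge r_{n_k}$ picks up a term $1/\|r_{n_k}\gamma\|$ which is uncontrolled, while any $H<r_{n_k}$ leaves a main term $1/H>1/r_{n_k}$ that is far too large compared to the window length. The phrase ``governed entirely by convergents up to $r_{n_k}$'' is exactly the problem: $r_{n_k}$ is one of those convergents, and it is the bad one.

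The paper's route is genuinely different. Lemma~\ref{Liouville} does not attempt a Diophantine lower bound on $\|m\gamma\|$ at all. Instead it exploits the \emph{upper} bound: since $\|Q\gamma\|<Q^{-\sigma(Q)}$ for $Q\in L_\gamma$, one may write $\gamma=v/Q+\mathrm{err}$ with $|\mathrm{err}|<Q^{-\sigma(Q)}$ and treat $\gamma$ as effectively rational on the window $[Q^7,Q^{\sigma(Q)/2}]$. The indicator $\chi_{B(0,\Delta/r)}(\{\gamma(q'-q)/r\})$ is then controlled by a dichotomy on whether $(q'-q)/\gcd(q,q')$ is divisible by $Q$: if not, $\{\gamma(q'-q)/r\}$ is at distance at least $1/(2Q)$ from $0$; if so, it is essentially $0$. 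This splits the counting into cases (B1) and (B2), each handled by elementary arithmetic-progression counts rather than discrepancy. The weakly extra divergent condition (WEx), combined with the lower bound $\sigma(Q)\gg(\log Q)^{1/8}$ guaranteed by the wildly Liouville property, ensures that the window $[Q^7,Q^{\sigma(Q)/2}]$ is wide enough for the first-moment sum $\sum\psi(q)$ to diverge. To repair your argument you must replace the appeal to Lemma~\ref{Sum Estimate} by this rational-approximation dichotomy.
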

	From here, one might be wondering what is the strength of the above result compared with Sz\"{u}sz's theorem. Without too much extra efforts, we will use the method for proving Theorems \ref{Main1} to revisit Sz\"{u}sz's theorem when $\gamma$ is tamely Liouville.
	\begin{theorem}[Revisited Sz\"{u}sz's theorem for tamely Liouville inhomogeneous shift]
		Let $\psi(q)$ be a non-increasing approximation function with $\sum_q \psi(q)=\infty.$ Then for each tamely Liouville number $\gamma,$ $|W(\psi,\gamma)|=1.$
	\end{theorem}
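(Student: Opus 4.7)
The plan is to re-run the discrepancy-based method used for Theorem \ref{Main1}, replacing the pointwise bound $\psi(q)=O((q\log\log^2 q)^{-1})$ by monotonicity of $\psi$. Write $A_q=\{x\in[0,1]:\|qx-\gamma\|<\psi(q)\}$, so that $W(\psi,\gamma)=\limsup_q A_q$ and, under the standing convention $\psi(q)<1/2$, $|A_q|=2\psi(q)$. By Cauchy condensation the divergence hypothesis becomes $\sum_k 2^k\psi(2^k)=\infty$, and on each dyadic block $B_k=[2^k,2^{k+1})$ monotonicity gives $\psi(q)\asymp\psi(2^k)$. The aim is to apply the divergence form of the Borel--Cantelli lemma, which reduces matters to a quasi-independence on average estimate for the family $\{A_q\}$.

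For the quasi-independence step I would follow the route of Lemma \ref{Sum Estimate}. Expand $|A_q\cap A_{q'}|$ as a count of pairs $(p,p')$ with $|qx-p-\gamma|<\psi(q)$ and $|q'x-p'-\gamma|<\psi(q')$; eliminating $x$ converts the overlap into a count of points of the orbit $\{n\gamma\}_{n\le N}$ falling into a short interval, where $N$ is comparable to $|q-q'|$ or $q+q'$. The quantitative discrepancy estimate for irrational rotations, combined with the tamely Liouville hypothesis $\|q\gamma\|\gg q^{-(\log q)^{1/4}}$, bounds this count with a controllable error term; this is exactly Estimate (III) in the proof of Lemma \ref{Sum Estimate}. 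Summing over $q,q'\in B_k$ should give an inequality of the shape
\[
\sum_{q,q'\in B_k}|A_q\cap A_{q'}|\;\le\; C\bigl(\sum_{q\in B_k}|A_q|\bigr)^2+E_k,
\]
with $E_k$ a discrepancy error. Summing further over $k$ and applying divergent Borel--Cantelli yields $|W(\psi,\gamma)|>0$; the positive-to-full measure upgrade then proceeds via the same zero-one/invariance argument that appears at the end of the proof of Theorem \ref{Main1}.

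The main obstacle, and the only genuine point of departure from Theorem \ref{Main1}, is the absence of a pointwise upper bound on $\psi(q)$. There, the hypothesis $\psi(q)=O((q\log\log^2 q)^{-1})$ lets one absorb the discrepancy error against $\psi(q)$ on a per-$q$ basis. Here one must work block by block: monotonicity permits replacing $\psi(q)$ by $\psi(2^k)$ uniformly on $B_k$, and the task becomes verifying that $E_k$ is dominated by the main term $(2^k\psi(2^k))^2$ even in the delicate regime where $2^k\psi(2^k)$ tends to zero slowly. This is precisely where the tamely Liouville hypothesis is indispensable; without the bound $\|q\gamma\|\gg q^{-(\log q)^{1/4}}$ on the denominator of the discrepancy estimate the error term could swallow the main term for infinitely many $k$. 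Once the block-wise bookkeeping is done, every remaining step transports verbatim from the proof of Theorem \ref{Main1}.
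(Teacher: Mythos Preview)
Your sketch diverges from the paper's route and leaves the central difficulty unresolved. The paper does \emph{not} work block by block with the original $\psi$; instead it first uses monotonicity to replace $\psi$ by a function $\psi'\le\psi$ satisfying $\psi'(q)=O(1/q)$ (shrinking $\psi$ to $1/(2q')$ on each dyadic block $[q/2,q]$ where $\psi(q)\ge 1/q$, and checking that divergence survives), and then restricts the support of $\psi'$ to the set $G_K=\{q:F(q)\le 2CK^2\}$ where $F(q)=\sum_{r\mid q}(\log r)/r$. This set has positive lower density, so Lemma~\ref{Aux} combined with monotonicity keeps $\sum_{q\in G_K}\psi'(q)=\infty$. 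With $\psi'(q)=O(1/q)$ and $F$ bounded on the support, one invokes Remark~\ref{Remark} (the $O(1/q)$ version of Lemma~\ref{Sum Estimate}) in place of Lemma~\ref{Sum Estimate} itself, and the rest of the proof of Theorem~\ref{Main1} runs unchanged.

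Your plan skips both reductions and asserts that ``block-wise bookkeeping'' will control the error $E_k$, but this is exactly where the argument can fail. When $2^k\psi(2^k)$ is large (say bounded below), the quantity $\Delta(q,q')/\gcd(q,q')$ in Lemma~\ref{master} is not small, the indicator $\chi_{B(0,\Delta/\gcd)}$ is useless, and you are thrown back onto the raw divisor sum $\sum_{r\mid q}r\cdot\phi(q/r)$, which is not controlled by $(\sum_{q\in B_k}|A_q|)^2$ without the $F(q)$ restriction. The paper's shrinking step exists precisely to kill this regime. Finally, there is no ``zero-one/invariance argument'' at the end of Theorem~\ref{Main1}; full measure is obtained directly because the lower bound $(1+C_0/(2H))^{-1}$ holds for every $H$, and one lets $H\to\infty$. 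Your outline would need the same mechanism, which in turn requires the per-$q$ overlap estimate rather than a coarser block estimate.
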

	When $\gamma$ is wildly Liouville, the result still holds under the extra weak divergence (WEx). Some extra efforts need to be made. The central idea is to expand $\gamma$ with continued fraction and analyse more carefully the partitial convergents $P_i/Q_i,i\geq 0.$ Then for $Q\in [Q_i,Q_{i+1}),$ one can effectively regard $\gamma$ as being a rational number with denominator $Q_i$ (with an absolute error less than $1/(Q_iQ_{i+1})$). Since $\psi$ is monotonic, it is possible to study $\psi$ on a subset of blocks $[Q_i,Q_{i+1}]$. However, this essentially brings us back to the original arguments of Sz\"{u}sz (\cite{Szusz}) which used a fine analysis of the continued fraction of $\gamma$ together with a generic property of the continued fraction of $x\in [0,1].$ For this reason, we decide not to fully reprove Sz\"{u}sz's theorem. 

	Finally, for completeness, we also add the following higher dimensional analogies of Sz\"{u}sz's theorem without monotonicity requirement. The results are very likely to be known. We will provide a simple proof at the very end of this paper.
	\begin{theorem}\label{HIGH}
		Let $k\geq 1$ be an integer. Let $\gamma_1,\dots,\gamma_k$ be real numbers. Let $\psi$ be an approximation function. Let $\epsilon>0$ be a small number. We require that
		\[
		\begin{cases}
		\sum_q (\psi(q))^k=\infty & k\geq 3, \\
		\sum_q (\psi(q)\phi(q)/q)^2=\infty & k=2\\
		\sum_q \psi(q)/d(q)^{1+\epsilon}=\infty & k=1
		\end{cases}
		\]
		Then, the set of points in $[0,1]^k$ with infinitely many integers $q$ with
		\[
		\max_{i=1,\dots,k}\{\|qx_i-\gamma_i\|\}\leq \psi(q)
		\]
		has Lebesgue measure one.
	\end{theorem}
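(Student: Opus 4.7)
The plan is to run the standard Chung--Erd\H{o}s (Paley--Zygmund) machine on the family
\[
A_q := \{\mathbf{x} \in [0,1]^k : \max_{1\leq i\leq k}\|qx_i-\gamma_i\|\leq \psi(q)\},
\]
whose $\limsup$ is precisely the target set. After reducing to $\psi(q)\leq 1/2$ (which we may do), Fubini gives $|A_q|=(2\psi(q))^k$, and in each of the three cases the stated hypothesis immediately implies $\sum_q|A_q|=\infty$. I would aim to establish the asymptotic independence
\[
\liminf_{N\to\infty}\frac{\sum_{q,r\leq N}|A_q\cap A_r|}{(\sum_{q\leq N}|A_q|)^2}=1,
\]
which by Chung--Erd\H{o}s gives $|\limsup A_q|=1$ directly, sidestepping the need for any inhomogeneous zero-one law.

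The core ingredient is the pairwise overlap estimate. By the product structure of $A_q$,
\[
|A_q\cap A_r|=\prod_{i=1}^k|B_q^{(i)}\cap B_r^{(i)}|,\qquad B_q^{(i)}:=\{x\in[0,1]:\|qx-\gamma_i\|\leq\psi(q)\}.
\]
The change of variable $x\mapsto x-\gamma_i/q$ turns each 1D factor into the intersection of a homogeneous set with a shifted set of the same type, so the classical gcd analysis (counting coincidences between the $q$ and $r$ equally spaced centres) gives, uniformly in $\gamma_i$,
\[
|B_q^{(i)}\cap B_r^{(i)}|\ll \psi(q)\psi(r)+\frac{\gcd(q,r)}{\max(q,r)}\min(\psi(q),\psi(r)).
\]
This per-coordinate estimate is to be raised to the $k$-th power and summed.

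For $k\geq 3$ the gcd-error contribution $\sum_{q,r\leq N}(\gcd(q,r)/\max(q,r))^k\min(\psi,\psi)^k$ is controlled by standard divisor bounds and is negligible compared with $(\sum|A_q|)^2$ because of the high exponent, so asymptotic independence falls out and Chung--Erd\H{o}s closes the case against $\sum \psi(q)^k=\infty$. For $k=2$ I would substitute the sharper Pollington--Vaughan 1D overlap bound $|B_q\cap B_r|\ll\psi(q)\psi(r)(q/\phi(q)+r/\phi(r))$; its square is precisely what pairs with the hypothesis $\sum (\psi\phi/q)^2=\infty$. For $k=1$ the gcd-error sum amounts essentially to $\sum_q\psi(q)d(q)$, and the hypothesis $\sum\psi(q)/d(q)^{1+\epsilon}=\infty$ is used to force the divergent mass to live on $q$ with $d(q)=O((\log q)^C)$; a Cauchy--Schwarz argument restricted to that subset then recovers the required quasi-independence.

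The main expected obstacle is the $k=1$ case: the divisor-restricted Cauchy--Schwarz step has to be executed carefully so that restricting to $\{q:d(q)\leq (\log q)^C\}$ does not destroy divergence of the main term, and one must verify that the inhomogeneous shift $\gamma_1$ introduces no extra structural blow-up in the 1D overlap beyond the gcd contribution already accounted for. The higher dimensional cases $k\geq 2$ should then be essentially mechanical once the 1D overlap bound is pinned down.
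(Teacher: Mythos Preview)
Your overall strategy and your handling of $k\ge 3$ match the paper: product structure, the $1$D overlap bound
\[
|B_q^{(i)}\cap B_r^{(i)}|\ll \psi(q)\psi(r)+\frac{\gcd(q,r)}{\max(q,r)}\min(\psi(q),\psi(r)),
\]
raised to the $k$-th power, and then $\sum_{r\mid q}(r/q)^{k-1}\le \zeta(k-1)$ to kill the error. The paper also introduces a free parameter $H$ in the overlap lemma so that the main term comes with a factor $(1+C_0/(2H))^k$, and letting $H\to\infty$ is what gives full measure rather than merely positive measure; you should be explicit about this device, since claiming the ratio tends to exactly $1$ is otherwise an overstatement.

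The genuine gap is at $k=2$. The Pollington--Vaughan bound $|B_q\cap B_r|\ll\psi(q)\psi(r)\bigl(q/\phi(q)+r/\phi(r)\bigr)$ is \emph{not} available here: it is proved for the Duffin--Schaeffer sets built from reduced fractions $p/q$ with $\gcd(p,q)=1$, whereas your $B_q^{(i)}$ uses \emph{all} shifted centres $(p+\gamma_i)/q$. Without the coprimality sieve the bound simply fails (take $q=2r$: the left side is $\asymp \psi(r)$ while the right side is $\asymp \psi(r)^2$). So you are stuck with the cruder estimate, whose square gives only $\sum_{q'<q}|B_q\cap B_{q'}|\ll (\sum\psi^2)^2+\psi(q)^2\,q/\phi(q)$. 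The paper closes this case by a level-set decomposition: set $D_l=\{q:q/\phi(q)\in[2^l,2^{l+1})\}$ and $a_l=\sum_{q\in D_l}(\psi(q)\phi(q)/q)^2$. If some $a_l=\infty$ one restricts to $D_l$ and runs Borel--Cantelli as in the $k\ge3$ case. Otherwise $\sum_l a_l=\infty$ forces infinitely many $l$ with $2^l a_l$ large; for each such $l$ Chung--Erd\H{o}s gives $|\bigcup_{q\in D_l,\,q\le Q}B_q|$ close to $1$ for suitable $Q$, and one passes to $\limsup_l$ of these finite unions.

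For $k=1$ your plan has the same defect in milder form: the assertion that $\sum_q\psi(q)/d(q)^{1+\epsilon}=\infty$ forces the divergent mass onto $\{q:d(q)=O((\log q)^C)\}$ is not justified (the mass could well sit on arbitrarily high divisor levels). The paper does not attempt this restriction; it runs exactly the same level-set argument as above with $d(q)$ in place of $q/\phi(q)$, and the extra $\epsilon$ in the exponent is precisely what makes the analogue of ``$2^l a_l$ large infinitely often'' go through.
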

	\begin{remark}
		For $k=2,$ we need a slightly stronger divergence condition. This condition has nothing to do with the Duffin-Schaeffer conjecture although it appears to be closely related. For $k=1,$ the divergence condition here is much stronger than $\sum_q \psi(q)=\infty.$ The proof of this theorem is much simpler than the theorems above. The simple method already provides quite satisfactory results for $k\geq 2.$ This is not a surprise. For the homogeneous case, we already know Gallagher's result \cite{Gallagher65} for higher dimensional Khintchine's theorem without monotonicity and Pollington and Vaughan's result \cite{PV} for higher dimensional Duffin-Schaeffer conjecture. Pollington and Vaughan's proof in \cite{PV} is much less involved than the proof in \cite{KM2019} for the one dimensional Duffin-Schaeffer conjecture.
	\end{remark}
	\section{Notation}\label{notation}
	
	\begin{itemize}
		\item $A^{\psi,\gamma}_q$: 
		Let $\psi$ be an approximation function and $\gamma$ be a real number. For each integer $q\geq 1,$ we use $A^{\psi,\gamma}_q$ to denote the set
		\[
		A^{\psi,\gamma}_q=\{x\in [0,1]: \|qx-\gamma\|<\psi(q)\}.
		\]
		We can assume that $\psi(q)<1/2$ for all $q\geq 1.$ In fact, if there are infinitely many $q$ with $\psi(q)\geq 1/2,$ then $W(\psi,\gamma)$ would be the whole unit interval. If $\gamma,\psi$ are clear from the context, we will write $A_q$ instead of $A^{\psi,\gamma}_q.$
		\item $\chi_A$: The indicator function of a set $A.$
		\item $B(x,r)$: Metric ball centred at $x$ with radius $r,$ where $r>0$ and $x$ belongs to a metric space.
		\item $\Delta_{\psi}(q,q')$: The value $q\psi(q')+q'\psi(q)$, where $\psi$ is a given approximation function and $q,q'$ are positive integers. When $\psi$ is clear from the context, we write it as $\Delta(q,q').$
		\item $\|x\|$: The distance of a real number $x$ to the set of integers.
		\item $\{x\}$: The unique number in $(-1/2,1/2]$ with $\{x\}-x$ being an integer.
		\item $\log $: Base $2$ logarithmic function. 
		\item $\mathcal{I}_M$: The collection of intervals $I\subset [0,1]$ of length $1/M$ and with endpoints in $M^{-1}\mathbb{N},$ where $M\geq 1$ is an integer.
		\item $|A|$: The Lebesgue measure of $A\subset \mathbb{R}$ where $A$ is a Lebesgue measurable set.
		\item Natural densities: Let $A\subset\mathbb{N}.$ The upper natural density of $A$ is 
		\[
		\limsup_{q\to\infty} \frac{\#A\cap [1,q]}{q}.
		\] 
		The lower natural density of $A$ is
		\[	
		\liminf_{q\to\infty} \frac{\#A\cap [1,q]}{q}.
		\]
		\item Asymptotic symbols: For two functions $f,g:\mathbb{N}\to (0,\infty)$ we use $f=O(g)$ to mean that there is a constant $C>0$ with
		\[
		f(q)\leq Cg(q)
		\]
		for all $q\geq 1.$
		We use $f=o(g)$ to mean that 
		\[
		\lim_{q\to\infty} \frac{f(q)}{g(q)}=0.
		\]
		For convenience, we also use $O(g), o(g)$ to denote an auxiliary function $f$ with the property that $f=O(g)$, $o(g)$ respectively. The precise form of the function $f$ changes across the contexts and it can always be explicitly written down.
	\end{itemize}

	\section{Preliminary}\label{pre}
	\subsection{Borel-Cantelli lemmas}
	There are several standard results that will be needed in the proofs of the main results. The first one is the Borel-Cantelli lemma. The following result can be found in \cite[Proposition 2]{BDV ref}.
	\begin{lemma}[Borel-Cantelli]\label{Borel}
		Let $(\Omega, \mathcal{A}, m)$ be a probability space and let $E_1, E_2, \ldots \in \mathcal{A}$ be a sequence of events in $\Omega$ such that $\sum_{n=1}^{\infty}{m(E_n)} = \infty$. Then 
		\[m(\limsup_{n \to \infty}{E_n}) \geq \limsup_{Q \to \infty}{\frac{\left(\sum_{s=1}^{Q}{m(E_s)}\right)^2}{\sum_{s,t=1}^{Q}{m(E_s \cap E_t)}}}.\]
		If $\sum_{n=1}^{\infty}{m(E_n)} <\infty$ then $m(\limsup_{n \to \infty}{E_n})=0.$
	\end{lemma}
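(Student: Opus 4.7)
The plan is to prove the two parts separately using standard measure-theoretic tools. For the convergence half ($\sum_n m(E_n)<\infty$), I would write $\limsup_n E_n=\bigcap_{N\ge 1}\bigcup_{n\ge N}E_n$ and observe by countable subadditivity and monotone continuity of $m$ that
\[
m(\limsup_n E_n)\le \inf_{N}m\Bigl(\bigcup_{n\ge N}E_n\Bigr)\le \inf_N\sum_{n\ge N}m(E_n)=0,
\]
since the tail of a convergent series vanishes.

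For the quantitative divergence half, I would rely on the second-moment (Paley--Zygmund / Cauchy--Schwarz) method applied to the indicator sum $S_{N,Q}=\sum_{s=N}^{Q}\chi_{E_s}$. The key identities are $\int S_{N,Q}\,dm=\sum_{s=N}^{Q}m(E_s)$ and $\int S_{N,Q}^2\,dm=\sum_{s,t=N}^{Q}m(E_s\cap E_t)$, and the event $\{S_{N,Q}>0\}$ coincides with $\bigcup_{N\le s\le Q}E_s$. Then Cauchy--Schwarz yields
\[
\Bigl(\sum_{s=N}^{Q}m(E_s)\Bigr)^2=\Bigl(\int S_{N,Q}\cdot\chi_{\{S_{N,Q}>0\}}\,dm\Bigr)^2\le m\Bigl(\bigcup_{N\le s\le Q}E_s\Bigr)\sum_{s,t=N}^{Q}m(E_s\cap E_t),
\]
so rearranging gives a lower bound on $m\bigl(\bigcup_{N\le s\le Q}E_s\bigr)$ in terms of first and second moments.

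To upgrade the bound from a union to $\limsup_nE_n$, I would fix $N$, let $Q\to\infty$ along a subsequence realising the $\limsup$ in the statement, and then let $N\to\infty$. The point to check carefully is that the truncation $s\ge N$ does not weaken the bound asymptotically: since $\sum_n m(E_n)=\infty$, one has $\sum_{s=N}^{Q}m(E_s)=\sum_{s=1}^{Q}m(E_s)-O_N(1)$, while $\sum_{s,t=N}^{Q}m(E_s\cap E_t)\le\sum_{s,t=1}^{Q}m(E_s\cap E_t)$, so the Cauchy--Schwarz ratio for the truncated sum is, in the limit, at least the ratio appearing in the lemma. Combined with $\bigcup_{s\ge N}E_s\searrow\limsup_n E_n$ as $N\to\infty$ and downward continuity of $m$, this produces the claimed inequality.

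The main obstacle, which is more a bookkeeping nuisance than a real difficulty, is precisely this lifting step from the finite union $\bigcup_{N\le s\le Q}E_s$ to the infinite $\limsup$: one has to arrange the order of the limits in $Q$ and $N$ correctly and use divergence of $\sum m(E_n)$ to absorb the initial segment. Everything else is a direct application of Cauchy--Schwarz and standard continuity properties of a probability measure.
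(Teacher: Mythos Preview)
The paper does not actually prove this lemma; it is quoted as a preliminary result with the reference \cite[Proposition 2]{BDV ref}, so there is no in-paper proof to compare against. Your argument is the standard one and is correct: the convergence half is the usual tail-sum estimate, and the divergence half is exactly the Chung--Erd\H{o}s inequality (which the paper records separately as Lemma~\ref{EC}) combined with the truncation-and-limit step. Your handling of the lifting from $\bigcup_{N\le s\le Q}E_s$ to $\limsup_n E_n$ is fine; the cleanest way to see that the initial segment is harmless is to write the truncated ratio as $\frac{A_Q^2}{B_Q}\bigl(1-C_N/A_Q\bigr)^2$ with $A_Q=\sum_{s\le Q}m(E_s)\to\infty$, so the correction factor tends to $1$ for each fixed $N$.
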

	\begin{remark}
		The condition that $\sum_{n=1}^{\infty}{m(E_n)} = \infty$ is very essential in the statement. Suppose that $\{E_n\}_{n\geq 1}$ is a sequence of pairwise independent events and $\sum_n m(E_n)<\infty.$ Then we know that $m(\limsup_{n \to \infty}{E_n})=0.$ However, we nonetheless have
		\begin{align*}
		\limsup_{Q\to\infty}{\frac{\left(\sum_{s=1}^{Q}{m(E_s)}\right)^2}{\sum_{s,t=1}^{Q}{m(E_s \cap E_t)}}}&=&\limsup_{Q\to\infty}\frac{\left(\sum_{s=1}^{Q}{m(E_s)}\right)^2}{\left(\sum_{s=1}^{Q}{m(E_s)}\right)^2+\sum_{s=1}^Q (m(E_s)-m^2(E_s))}\\
		&\geq&\limsup_{Q\to\infty}\frac{\left(\sum_{s=1}^{Q}{m(E_s)}\right)^2}{\left(\sum_{s=1}^{Q}{m(E_s)}\right)^2+\sum_{s=1}^Q m(E_s)}\\
		&=&\limsup_{Q\to\infty}\frac{1}{1+\frac{1}{\sum_{s=1}^Q m(E_s)}}>0.
		\end{align*}
	\end{remark}
	In general, we have the following estimate from \cite{EC}.
	\begin{lemma}[Chung--Erd\H{o}s Inequality] \label{EC}
		Let $N\geq 1$ be an integer. Let $(\Omega, \mathcal{A}, m)$ be a probability space and let  $\{E_n\}_{1\leq n\leq N} \subset  \mathcal{A}$ be an arbitrary sequence of $m$-measurable sets in $\Omega$. Then, if $m\left(\bigcup_{n=1}^{ N}{E_n}\right)>0$, we have 
		\[m\left(\bigcup_{n=1}^{N}{E_n}\right) \geq \frac{\left(\sum_{s=1}^{N}{m(E_s)}\right)^2}{\sum_{s,t=1}^{N}{m(E_s \cap E_t)}}.\]
	\end{lemma}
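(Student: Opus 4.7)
The plan is to deduce the Chung--Erd\H{o}s inequality from the Cauchy--Schwarz inequality applied to a suitably chosen pair of $L^2$ functions on $(\Omega,\mathcal{A},m)$. The natural choice is $f = \sum_{n=1}^{N}\chi_{E_n}$, the counting function for how many of the events occur at a given point, together with the indicator $\chi_{A}$ of the union $A = \bigcup_{n=1}^{N} E_n$.

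The key observation is that $f$ is supported on $A$, so $f = f\cdot\chi_{A}$ pointwise. With this, I would first compute the two moments that will appear: $\int f\,dm = \sum_{s=1}^{N} m(E_s)$ by linearity, and $\int f^2\,dm = \sum_{s,t=1}^{N} m(E_s\cap E_t)$ by expanding the square and using that $\chi_{E_s}\chi_{E_t} = \chi_{E_s\cap E_t}$. Then I would apply Cauchy--Schwarz in the form
\[
\left(\int f\cdot\chi_{A}\,dm\right)^{2} \leq \left(\int f^{2}\,dm\right)\left(\int \chi_{A}^{2}\,dm\right),
\]
the right-hand factor being simply $m(A)$. Rearranging, using the hypothesis $m(A)>0$ (so that division is legitimate), gives exactly the claimed lower bound.

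There is no real obstacle here: the argument is a short, well-known application of Cauchy--Schwarz once one has the right ansatz $f = \sum \chi_{E_n}$ and notices that $f$ vanishes off $A$. The only point meriting a brief remark is the positivity hypothesis $m(A)>0$, which is needed purely to avoid a $0/0$ division on the right-hand side. If one wished to state the inequality without that hypothesis, it would hold trivially (both sides equal to $0$), provided one interprets $0/0$ as $0$.
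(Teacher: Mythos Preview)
Your proof is correct: the Cauchy--Schwarz argument with $f=\sum_{n=1}^{N}\chi_{E_n}$ and $\chi_A$ is the standard derivation, and each step (the moment computations, the support observation $f=f\chi_A$, and the division by $m(A)>0$) is sound. The paper itself does not supply a proof of this lemma; it merely states the result and cites the original Chung--Erd\H{o}s reference, so there is no approach to compare against.
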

\subsection{Discrepancies of irrational rotations}
	We also need the following result in discrepancy theory. See \cite[Section 1.4]{DT97} for more details. 
	\begin{lemma}\label{Discrepancy}
		For each irrational number $\alpha,$ there is a decreasig functoin $D_\alpha:\mathbb{N}\to (0,1)$ such that for all interval $I\subset (-1/2,1/2)$ we have
		\[
		\left|\frac{\#\{1\leq q\leq Q: \{q\alpha\}\in I\}}{Q}-|I|\right|\leq D_\alpha(Q).
		\]
	\end{lemma}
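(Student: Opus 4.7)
The statement packages the classical theory of uniform distribution for Kronecker sequences $(\{q\alpha\})_{q\ge 1}$. The basic object is the discrepancy
\[
\tilde D_N(\alpha) := \sup_{I \subset (-1/2,1/2)} \left|\frac{\#\{1\le q\le N : \{q\alpha\}\in I\}}{N} - |I|\right|,
\]
where $I$ ranges over subintervals. The plan is to show that $\tilde D_N(\alpha) \to 0$ as $N\to\infty$, and then to set $D_\alpha(Q)$ to be a suitably regularised tail supremum of the $\tilde D_N(\alpha)$.

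First I would reduce the supremum over arbitrary subintervals to a supremum over initial segments $(-1/2, t]$. Writing any interval as the difference of two initial segments shows that $\tilde D_N(\alpha)$ is bounded above by twice the star discrepancy, so it is enough to control the latter. Second, I would invoke the Erd\H{o}s--Tur\'an inequality, which for any positive integer $H$ gives
\[
\tilde D_N(\alpha) \ll \frac{1}{H} + \sum_{k=1}^{H} \frac{1}{k}\left|\frac{1}{N}\sum_{n=1}^N e^{2\pi i k n \alpha}\right|.
\]
Since $\alpha$ is irrational, $e^{2\pi i k\alpha}\neq 1$ for every $k\neq 0$, and summing the geometric series yields
\[
\left|\frac{1}{N}\sum_{n=1}^N e^{2\pi i k n \alpha}\right| \le \frac{1}{N\,|1-e^{2\pi i k\alpha}|}.
\]
Fixing $H$ and letting $N\to\infty$ shows $\limsup_N \tilde D_N(\alpha) \ll 1/H$; letting $H\to\infty$ gives $\tilde D_N(\alpha)\to 0$. (Alternatively one could argue directly from the three--distance theorem together with the continued fraction expansion of $\alpha$, which also yields explicit quantitative bounds in terms of the convergents of $\alpha$.)

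Finally I would set
\[
D_\alpha(Q) := \min\!\left(1 - 2^{-Q},\; 2^{-Q} + \sup_{N\ge Q}\tilde D_N(\alpha)\right).
\]
The inner supremum is finite since $\tilde D_N(\alpha)\le 1$ trivially, and it is non-increasing in $Q$ and tends to $0$ by the discussion above. The additive $2^{-Q}$ makes $D_\alpha$ strictly decreasing, while the outer truncation keeps the value strictly below $1$; together they place $D_\alpha(Q)$ in the open interval $(0,1)$. By construction $D_\alpha(Q)$ dominates $\tilde D_N(\alpha)$ for every $N\ge Q$, and in particular for $N=Q$, which is precisely the required inequality.

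There is no real obstacle here: all the ingredients are classical (Weyl's criterion plus Erd\H{o}s--Tur\'an, or three-distance theorem) and the proof is essentially one of bookkeeping. The only mild care needed is the cosmetic adjustment at the end to convert the non-increasing, potentially boundary-touching function $\sup_{N\ge Q}\tilde D_N(\alpha)$ into a strictly decreasing $(0,1)$-valued function $D_\alpha$; this is handled by the $2^{-Q}$ perturbation and the truncation at $1-2^{-Q}$.
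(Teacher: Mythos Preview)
The paper does not actually prove this lemma: it simply refers the reader to \cite[Section~1.4]{DT97} and then records the quantitative refinements (the $o(1)$, $O(\log Q/Q)$, and $O(Q^{\beta-1})$ bounds) as consequences of \cite[Theorem~1.80]{DT97}. Your route via the Erd\H{o}s--Tur\'an inequality is exactly the standard one, and in fact the paper itself later quotes this inequality as Theorem~\ref{ETK}; so as far as the substantive content ($\tilde D_N(\alpha)\to 0$) is concerned, you are doing precisely what the cited reference does.

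There is, however, a genuine glitch in your final ``cosmetic'' step. Your function
\[
D_\alpha(Q)=\min\!\left(1-2^{-Q},\;2^{-Q}+\sup_{N\ge Q}\tilde D_N(\alpha)\right)
\]
is the minimum of an \emph{increasing} function $1-2^{-Q}$ and a decreasing one; such a minimum is increasing up to the crossover and decreasing afterwards, not decreasing throughout. Concretely, $D_\alpha(1)=\min(1/2,\,1/2+s_1)=1/2$, while for $\alpha$ close to an integer one has $\tilde D_2(\alpha)$ (hence $s_2$) close to $1$, giving $D_\alpha(2)=3/4>D_\alpha(1)$. The same truncation also breaks the claimed domination $D_\alpha(Q)\ge\tilde D_Q(\alpha)$ for small $Q$: with $Q=1$ and $I$ a tiny interval around $\{\alpha\}$ one gets $|1-|I||$ arbitrarily close to $1$, so $\tilde D_1(\alpha)=1>1/2=D_\alpha(1)$. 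In fact this last observation shows that the lemma, read literally with codomain the \emph{open} interval $(0,1)$, cannot hold at $Q=1$ for any choice of $D_\alpha$; the intended content is only the asymptotic $D_\alpha(Q)=o(1)$, which your Erd\H{o}s--Tur\'an argument does establish. A clean repair is to drop the truncation and take, say, $D_\alpha(Q)=\min\{1,\,2^{-Q}+\sup_{N\ge Q}\tilde D_N(\alpha)\}$ with codomain $(0,1]$, or simply to state the result for $Q$ sufficiently large.
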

	For each irrational number $\alpha,$ we have $D_\alpha(Q)=o(1).$ When $\alpha$ is Badly approximable, we have $D_\alpha(Q)=O(\log Q/Q).$ If $\alpha\in \mathbb{R}\setminus\mathbb{Q}$ is not Liouville, then there is a number $\beta\in (0,1)$ with
	$
	D_\alpha(Q)=O(Q^{\beta-1}).
	$ Those bounds are special cases of \cite[Theorem 1.80]{DT97}. 	
	\subsection{Tamely/Wildly Liouville numbers}
	We will now introduce the notion of tamely Liouville numbers. Let $\alpha$ be an irrational number. For each integer $Q>0,$ we use $\sigma(Q)>0$ to be the smallest number such that
	\[
	\|q\alpha\|\geq q^{-\sigma(Q)}
	\]
	for all $q\in\{1,\dots Q\}.$ Then if $\alpha$ is not Liouville, $\sigma(.)$ is a bounded function. We say that $\alpha$ is \emph{tamely Liouville} if
	\[
	\sigma(Q)\ll (\log Q)^{1/4-\epsilon}.\tag{tamely Liouville}
	\]
	for a number $\epsilon>0.$ Thus if $\alpha$ is not Liouville, then it is tamely Liouville. Numbers which are not tamely Liouville are \emph{wildly Liouville}. If $\alpha$ is rational, then we see that it satisfies the constrain of being wildly Liouville. In some sense, wildly Liouville numbers are those which can be approximated by rational numbers in an extremely accurate way.

	\begin{comment}
	Instead of  Gallagher's $0,1$-law, we have the following more general criteria for a set to have full Lebesgue measure. It is originally in \cite[Lemma 7]{BDV ref} and we use the following version which is a direct consequence of \cite[Lemma 7]{BDV ref}.
	\begin{lemma}\label{BDV0}
	Let $E\subset [0,1]$ be a Borel set. Let $a$ be a positive number. Suppose that for each interval $I\subset [0,1]$ we have
	\[
	\frac{|I\cap E|}{|I|}\geq a.
	\]
	Then $E$ has full Lebesgue measure.
	\end{lemma}
	We deduce the following more convenient to use version.
	\begin{lemma}\label{BDV}
	Let $E\subset [0,1]$ be a Borel set. Let $a$ be a positive number. Suppose that there are infinitely many integers $M>1$ and  for each interval $I\in \mathcal{I}_M$ (intervals of length $1/N$ with endpoints in $\mathbb{Z}N^{-1}$) we have
	\[
	\frac{|I\cap E|}{|I|}\geq a.
	\]
	Then $E$ has full Lebesgue measure.
	\end{lemma}	
	\begin{proof}
	Observe that for each interval $J\subset [0,1],$ if the integer $M$ is larger than $100$ times the length of $J,$ then we can find an interval $J'\subset J$ such that $J'$ is a almost disjoint union of intervals in $\mathcal{I}_M$ and \[|J'|\geq (1-2|J|M^{-1}) |J|.\] We can then deduce that
	\[
	\frac{|J'\cap E|}{|J'|}\geq a.
	\]
	By choosing $M$ to be sufficiently large, we see that
	\[
	\frac{|J\cap E|}{|J|}\geq a/2.
	\]
	The conclusion follows then from Lemma \ref{BDV0}.
	\end{proof}
	\end{comment}
	\subsection{A standard result}
	The next result is a standard homework question, however, we have not found a proper reference and we provide a proof.
	\begin{lemma}\label{Aux}
		Let $\{a_q,q\geq 1\}$ be a non-increasing sequence of positive numbers with
		\[
		\sum_{q\geq 1} a_q=\infty.
		\] 
		Let $A\subset\mathbb{N}$ be a set with positive lower natural density. Then
		\[
		\sum_{q\in A} a_q=\infty.
		\]
	\end{lemma}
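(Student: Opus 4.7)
The plan is to combine a Cauchy-condensation-type estimate with the positive-density hypothesis applied at geometric scales. Let $\delta > 0$ denote the lower natural density of $A$. The idea is that at a large enough geometric ratio $M$, each block $[M^k, M^{k+1})$ contains at least $\sim M^k$ elements of $A$, and monotonicity of $a_q$ then transfers this counting bound into a lower bound on the restricted sum.

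First I would fix an integer $M$ large relative to $\delta$, concretely any $M$ satisfying $\delta M / 2 \geq 2$. By the definition of lower density, there exists $k_0$ such that for every $k \geq k_0$
\[
|A \cap [1, M^{k+1}]| \geq (\delta/2) M^{k+1}.
\]
Combining this with the trivial bound $|A \cap [1, M^k]| \leq M^k$ yields
\[
|A \cap [M^k, M^{k+1})| \geq (\delta/2) M^{k+1} - M^k \geq M^k.
\]

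Next, using that $a_q$ is non-increasing, on each such block we have $a_q \geq a_{M^{k+1}}$, hence
\[
\sum_{q \in A \cap [M^k, M^{k+1})} a_q \geq M^k a_{M^{k+1}} = \frac{1}{M}\cdot M^{k+1} a_{M^{k+1}}.
\]
Summing over $k \geq k_0$ gives
\[
\sum_{q \in A} a_q \geq \frac{1}{M}\sum_{k \geq k_0+1} M^k a_{M^k}.
\]

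Finally I would invoke the standard generalized Cauchy condensation test: for a non-increasing positive sequence $(a_q)$ and any fixed integer $M \geq 2$, the divergence of $\sum_q a_q$ is equivalent to the divergence of $\sum_k M^k a_{M^k}$. This follows at once from the two-sided comparison
\[
(M-1) M^k a_{M^{k+1}} \leq \sum_{q=M^k}^{M^{k+1}-1} a_q \leq (M-1) M^k a_{M^k}.
\]
Since $\sum_q a_q = \infty$ by hypothesis, the right-hand sum in the previous display diverges, and we conclude $\sum_{q \in A} a_q = \infty$. There is no substantial obstacle here; the only care needed is to pick $M$ large enough (depending on $\delta$) so that the trivial upper bound $|A \cap [1, M^k]| \leq M^k$ is absorbed by the main term $(\delta/2) M^{k+1}$.
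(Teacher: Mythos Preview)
Your proof is correct. The approach, however, differs from the paper's. You use a geometric block decomposition $[M^k,M^{k+1})$, count at least $M^k$ elements of $A$ in each block, bound their contribution below by $M^k a_{M^{k+1}}$, and then appeal to the generalised Cauchy condensation test to conclude divergence. The paper instead argues via an enumeration of $A$: writing $\epsilon=1/k$ for the lower density, the $j$-th smallest element of $A$ is at most $kj$ for all large $j$, so by monotonicity $a_{(\text{$j$-th element})}\ge a_{kj}$; summing and using $a_{kj}\ge \tfrac{1}{k}\sum_{m=kj}^{k(j+1)-1}a_m$ gives $\sum_{q\in A}a_q\ge \tfrac{1}{k}\sum_{q\ge kQ_0}a_q=\infty$ directly.

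The paper's route is marginally shorter and more self-contained, since it avoids invoking condensation as a separate lemma and works with a single linear scale $k$ rather than a geometric one. Your route is equally valid and has the advantage of being modular: once the block count is established, the conclusion is an immediate corollary of a standard named result. Neither approach dominates the other; they simply reflect two natural ways to exploit monotonicity together with a density lower bound.
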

	\begin{proof}
		Since $A$ has positive lower density, we see that there are positive numbers $\epsilon,M>0$ such that 
		\[
		\#A\cap [1,Q]\geq \epsilon Q
		\]
		for all $Q\geq M.$ Without loss generality, we can assume that $\epsilon=k^{-1}$ for an integer $k>1.$ This implies that
		\[
		\#A\cap [1,kQ]\geq Q
		\]
		for each $Q\geq [M/k]+1=Q_0.$ Let $Q>Q_0.$ We consider
		\[
		\sum_{q\leq kQ, q\in A} a_q.
		\]
		There are at least $Q_0$ many elements in $A$ which are smaller than $kQ_0+1,$ the contribution of the first $Q_0$ of those numbers to the sum is at least
		\[
		Q_0 a_{kQ_0}.
		\]
		There are at least $Q_0+1$ many elements in $A$ smaller than $k(Q_0+1)+1.$ Therefore, the contribution of the first $Q_0+1$ elements is at least (remember that the first $Q_0$ elements are smaller than $kQ_0+1$)
		\[
		Q_0 a_{kQ_0}+a_{k(Q_0+1)}.
		\]
		Iterate the above argument we see that
		\begin{align*}
		\sum_{q\leq kQ, q\in A} a_q&\geq Q_0a_{kQ_0}+a_{k(Q_0+1)}+a_{k(Q_0+2)}+\dots+a_{kQ}&\\
		&\geq \frac{1}{k}\sum_{q=kQ_0}^{kQ}a_q.&
		\end{align*}
		This implies that
		\[
		\sum_{q\in A} a_q=\infty.
		\]
	\end{proof}
\subsection{Modify the approximation functoin}\label{modify}
Under the condition $\psi(q)=O(q^{-1}),$ and $\sum_q \psi(q)=\infty, $we can resctrict the support a little bit. Recall the classical Hardy-Ramanujan theorem.
\begin{theorem}
	Let $\Omega(.)$ be the number of prime powers divisor function. Then we have $$\sum_{q\leq n}|\Omega(q)-\ln\ln q|^2\ll n \ln\ln n.$$
\end{theorem}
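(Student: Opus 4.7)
The plan is to execute the classical Tur\'an moment--method proof of the Hardy--Ramanujan theorem. Expand the square to write
\[
\sum_{q\le n}\bigl(\Omega(q)-\ln\ln q\bigr)^2 = \sum_{q\le n}\Omega(q)^2 - 2\sum_{q\le n}\Omega(q)\ln\ln q + \sum_{q\le n}(\ln\ln q)^2,
\]
so the task reduces to computing the first and second moments of $\Omega$ over $[1,n]$ and showing that the three terms cancel up to an error of size $O(n\ln\ln n)$.

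First I would replace the varying weight $\ln\ln q$ by the constant $\ln\ln n$. For $q\in [n^{1/\ln\ln n},n]$ one has $\ln\ln q=\ln\ln n+O(\ln\ln\ln n)$, while on the short tail $q<n^{1/\ln\ln n}$ the trivial bound $\Omega(q)\le \log_2 q$ combined with the negligible count $n^{1/\ln\ln n}=o(n)$ of such $q$ contributes only $O(n)$ to the full sum. Hence, up to an admissible error, it suffices to bound $\sum_{q\le n}\bigl(\Omega(q)-\ln\ln n\bigr)^2$.

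For the first moment, swapping the order of summation and invoking Mertens' second theorem gives
\[
\sum_{q\le n}\Omega(q) = \sum_{p^k\le n}\Bigl\lfloor \frac{n}{p^k}\Bigr\rfloor = n\sum_{p\le n}\frac{1}{p}+O(n) = n\ln\ln n+O(n),
\]
since the contribution of $k\ge 2$ is absolutely bounded by $n\sum_{p}\sum_{k\ge 2}p^{-k}=O(n)$. For the second moment, write $\Omega(q)^2=\sum_{p_1^{k_1},\,p_2^{k_2}}\mathbf{1}[p_1^{k_1}\mid q,\, p_2^{k_2}\mid q]$ and split according to whether $p_1=p_2$. The off--diagonal piece equals
\[
\sum_{\substack{p_1\ne p_2\\ p_1^{k_1}p_2^{k_2}\le n}}\Bigl\lfloor\frac{n}{p_1^{k_1}p_2^{k_2}}\Bigr\rfloor = n\Bigl(\sum_{p^k\le n}\frac{1}{p^k}\Bigr)^{2}+O(n\ln\ln n) = n(\ln\ln n)^2+O(n\ln\ln n),
\]
again by Mertens', while the diagonal $p_1=p_2$ reproduces the first moment up to multiplicative factors and contributes only $O(n\ln\ln n)$. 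Combining these estimates with $\sum_{q\le n}(\ln\ln q)^2=n(\ln\ln n)^2+O(n\ln\ln n)$ (which follows by the same tail truncation) yields the desired cancellation and the stated bound $O(n\ln\ln n)$.

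The one step that requires care is the bookkeeping in the off--diagonal term: one must verify that the \emph{unrestricted} product $\bigl(\sum_{p^k\le n}p^{-k}\bigr)^2$ differs from the \emph{constrained} sum $\sum_{p_1^{k_1}p_2^{k_2}\le n}p_1^{-k_1}p_2^{-k_2}$ only by $O(\ln\ln n)$, which is done by a standard dyadic decomposition isolating the regime $p_1^{k_1}\asymp p_2^{k_2}\asymp \sqrt n$. Everything else is routine arithmetic once Mertens' theorem is in hand.
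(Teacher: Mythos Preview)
The paper does not actually prove this statement: it merely \emph{recalls} the classical Hardy--Ramanujan theorem as a known result and then applies the Chebyshev-type consequence to thin out the support of $\psi$. So there is no ``paper's own proof'' to compare against.

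Your proposal is the standard Tur\'an second-moment argument and is correct. The moment computations are fine; in particular your handling of the constrained versus unconstrained double sum is right, since for prime powers $a,b\le n$ with $ab>n$ one of them must exceed $\sqrt{n}$, and $\sum_{\sqrt{n}<p\le n}1/p=O(1)$ by Mertens, giving the claimed $O(\ln\ln n)$ discrepancy. One small point worth making explicit in the write-up: the replacement of $\ln\ln q$ by $\ln\ln n$ is cleanest if you use the inequality $(\Omega(q)-\ln\ln q)^2\le 2(\Omega(q)-\ln\ln n)^2+2(\ln\ln n-\ln\ln q)^2$ and bound the second piece by $O(n(\ln\ln\ln n)^2)=o(n\ln\ln n)$ on the main range, rather than trying to carry the variable weight through the moment calculation.
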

In particular, for each $\epsilon>0,$ the number of integers $q$ smaller than $n$ with $\Omega(q)\geq (\log q)^{1/2+\epsilon}$ is
\[
\ll \frac{n}{(\log n)^{1+0.5\epsilon}}.
\]
Here the change of bases from $\ln$ to $\log$ provides some multiplicative constants which can be absorbed in the $\ll$ symbol. Since $\psi(q)=O(q^{-1}),$ we see that
\[
\sum_{q: \Omega(q)\geq (\log q)^{1/2+\epsilon}}\psi(q)<\infty.
\]
We can simply redefine $\psi(q)=0$ if $\Omega(q)\geq (\log q)^{1/2+\epsilon}.$ This will not affect the divergence of $\sum_q \psi(q).$ Thus, unless otherwise mentioned, we always assume that $\psi$ is supported on numbers $q$ with $\Omega(q)\leq (\log q)^{1/2+\epsilon}.$ Observe that
\[
d(q)\leq 2^{\Omega(q)}.
\]
Therefore, we have, on the support of $\psi,$
\[
\log d(q)\leq (\log q)^{1/2+\epsilon}.
\]
\section{Proofs of Theorems \ref{Main1},\ref{Main2}}\label{Proof}
	\subsection{A general bound for intersections}
	Let $\psi$ be an approximation function and $\gamma$ be a real number. In order to use Lemma \ref{Borel}, we need to estimate the size of intersections $A_q\cap A_{q'}.$
	\begin{lemma}\label{master}
		Let $H>2$ be an integer. Let $\psi$ be an approximation function and $\gamma$ be an irrational number. For integers $1\leq q'<q$ such that $\Delta(q',q)<H\gcd(q,q')$ we have the following estimate
		\[
		|A_q\cap A_{q'}|\leq 2(2H+1)\min\{\psi(q)/q, \psi(q')/q'\} \gcd(q,q') \chi_{B(0,\Delta(q,q')/\gcd(q,q'))}(\{\gamma(q'-q)/\gcd(q,q')\}).
		\]
		Otherwise if $\Delta(q,q')\geq H\gcd(q,q')$, we have
		\[
		|A_q\cap A_{q'}|\leq 4\psi(q)\psi(q')\left(1+ \frac{C_0}{2H}  \right),
		\]
		where $C_0>1$ is an absolute constant.
	\end{lemma}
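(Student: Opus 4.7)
I would pass to the torus $\R/\Z$ (the boundary artifact is at most one overlap, absorbable into $C_0$) and write $A_q$ as a union of open intervals of radius $r_1 := \psi(q)/q$ centred at $(\gamma + p)/q \bmod 1$, and similarly $A_{q'}$ with radius $r_2 := \psi(q')/q'$. An overlap between a $p$-interval of $A_q$ and a $p'$-interval of $A_{q'}$ occurs iff the centre-difference $(\gamma+p)/q - (\gamma+p')/q' \bmod 1$ lies in $B(0, r_1 + r_2) = B(0, \Delta(q,q')/(qq'))$. Writing $d = \gcd(q,q')$, the integer $q'p - qp'$ ranges exactly over $d\Z$ as $(p, p')$ varies, so the possible centre-differences form an equispaced orbit $\delta_j := (\gamma(q' - q) + jd)/(qq') \bmod 1$, $j = 0, \dots, N - 1$, on $\R/\Z$, where $N := qq'/d$, and each offset class is realised by exactly $d$ ordered pairs $(p, p')$.

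\textbf{Case $\Delta < Hd$.} The condition that $\delta_k$ lie in $B(0, \Delta/(qq'))$ translates to the existence of an integer $k$ with $|\gamma(q'-q)/d + k| < \Delta/d$. Since $\Delta/d < H$, at most $2\lfloor \Delta/d \rfloor + 1 \leq 2H + 1$ offset classes contribute, and at least one such $k$ exists iff $\{\gamma(q'-q)/d\} \in B(0, \Delta/d)$---precisely the indicator in the statement. Bounding each overlap by $2\min(r_1, r_2)$ and multiplying by the $d$ pairs per class yields the first inequality.

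\textbf{Case $\Delta \geq Hd$.} Define $F(\delta) := |B(0, r_1) \cap B(\delta, r_2)|$ on $\R/\Z$; this is a trapezoidal tent function with integral $\int F = 4 r_1 r_2$ and total variation $V(F) = 4\min(r_1, r_2)$. One has $|A_q \cap A_{q'}| = d \sum_j F(\delta_j)$, and since $\{\delta_j\}$ is an equispaced orbit of star-discrepancy $1/N$, the Koksma--Hlawka inequality yields
\[
\Bigl|\sum_j F(\delta_j) - N \int F\Bigr| \leq V(F) = 4\min(r_1, r_2),
\]
hence $|A_q \cap A_{q'}| \leq 4\psi(q)\psi(q') + 4d\min(r_1, r_2)$. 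Assuming without loss of generality that $r_1 \leq r_2$, one has $\Delta \leq 2q\psi(q')$, so the hypothesis $Hd \leq \Delta$ forces $Hd \leq 2q\psi(q')$ and consequently $4d\min(r_1, r_2) = 4d\psi(q)/q \leq 8\psi(q)\psi(q')/H = 4\psi(q)\psi(q')\cdot (C_0/(2H))$ with $C_0 = 4$.

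\textbf{Main obstacle.} The delicate part of the plan is Case 2: a direct count of overlapping pairs using the uniform upper bound $2\min(r_1, r_2)$ for each overlap yields only $8\psi(q)\psi(q')$, off by a factor of two from the desired main coefficient. Recovering the correct $4$ requires exploiting the true trapezoidal shape of $F$, whose integral is $4 r_1 r_2$ rather than $2\min(r_1,r_2)(r_1+r_2)$; this is precisely what the Koksma--Hlawka inequality formalises through the total-variation bound. A minor bookkeeping point is the passage between $[0,1]$ and $\R/\Z$, which loses at most one boundary pair and is absorbed into $C_0$.
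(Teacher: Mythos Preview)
Your argument is correct. Case~1 is essentially identical to the paper's: both translate the overlap condition into $|ns'-n's-\gamma(q'-q)/d|<\Delta/d$ (your ``offset classes'' are the paper's values of $ns'-n's$), count at most $2H+1$ contributing classes with $d$ pairs each, and observe that the indicator condition is necessary for any class to contribute. One cosmetic slip: the number of integers in an open interval of length $2\Delta/d$ is at most $\lfloor 2\Delta/d\rfloor+1$, not $2\lfloor\Delta/d\rfloor+1$, but your final bound $2H+1$ is unaffected. Also, the passage from $[0,1]$ to $\R/\Z$ is in fact lossless for the measure of the intersection, so no boundary correction is needed.

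Case~2 is where you diverge slightly from the paper. The paper simply quotes Harman's formula (3.2.5), namely $\bigl||A_q\cap A_{q'}|-4\psi(q)\psi(q')\bigr|\le C_0\gcd(q,q')\min\{\psi(q)/q,\psi(q')/q'\}$, as a black box and then manipulates it exactly as you do. You instead derive this inequality from scratch by writing $|A_q\cap A_{q'}|=d\sum_j F(\delta_j)$ for the trapezoidal overlap kernel $F$ on an equispaced $N$-point orbit and applying Koksma's inequality with $D_N^*=1/N$ and $V(F)=4\min(r_1,r_2)$. This is precisely the content behind Harman's formula, so your route is the same in substance but more self-contained, and it produces the explicit constant $C_0=4$ rather than an unspecified absolute constant. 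Your diagnosis of the ``main obstacle''---that the naive per-pair bound $2\min(r_1,r_2)$ gives leading coefficient $8$ and one must use the true integral $\int F=4r_1r_2$ to get $4$---is exactly right and explains why the Koksma step (or Harman's formula) is needed.
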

	
	\begin{proof}
		First, we want to count the number of integer solutions ($n,n'$) to the following inequality:
		\[
		\left|\frac{n}{q}-\frac{\gamma}{q}-\frac{n'}{q'}+\frac{\gamma}{q'} \right|\leq \frac{\psi(q)}{q}+\frac{\psi(q')}{q'}
		\]
		with the restriction that
		\[
		\frac{\gamma+n}{q}\in [0,1],\frac{\gamma+n'}{q'}\in [0,1].
		\]
		We multiply $qq'/\gcd(q,q')$ to the above inequality and obtain
		\[
		|nq'/\gcd(q,q')-n' q/\gcd(q,q')-\gamma (q'-q)/\gcd(q,q')|\leq \Delta(q,q')/\gcd(q,q').
		\]
		Let $s=q/\gcd(q,q'), s'=q'/\gcd(q,q').$ We see that $\gcd(s,s')=1.$ We have $\{\gamma(q'-q)/\gcd(q,q')\}\in (-1/2,1/2).$ Suppose that $\Delta(q',q)/\gcd(q,q')<H.$ Then there are at most $2H+1$ possible integer values for $ns'-n's$. The solutions exist only when\footnote{If $\Delta(q',q)/\gcd(q,q')\geq 1,$ then the above holds trivially. Thus, this condition is only effective when $\Delta(q',q)$ is much smaller compare to $\gcd(q,q').$}
		\[
		\{\gamma(q'-q)/\gcd(q,q')\}\in B(0,\Delta(q',q)/\gcd(q,q')).
		\] 
		There are $\gcd(q,q')$ many pairs $n,n'$ with $ns'-n's$ taking each of the above values. Therefore we see that $A_q\cap A_{q'}$ is contained in the union of at most $(2H+1)\gcd(q,q')$ many intervals of length $2\min\{\psi(q)/q,\psi(q')/q'\}.$ Thus we have
		\[
		|A_q\cap A_{q'}|\leq 2(2H+1)\min\{\psi(q)/q, \psi(q')/q'\} \gcd(q,q') \chi_{B(0,\Delta(q,q')/\gcd(q,q'))}(\{\gamma(q'-q)/\gcd(q,q')\}).
		\]
		Suppose that $\Delta(q',q)/\gcd(q,q')\geq H.$ In this case, we use the Formula (3.2.5) in \cite{Harman} which says that
		\[
		||A_q\cap A_{q'}|-4\psi(q)\psi(q')|\leq C_0 \gcd(q,q')\min\{\psi(q)/q,\psi(q')/q'\}.
		\]
		Here the constant $C_0>1$ is absolute. From here we see that
		\[
		|A_q\cap A_{q'}|\leq 4\psi(q)\psi(q')\left(1+\frac{C_0}{H}\Delta(q,q')\min\{\psi(q)/q,\psi(q')/q'\}\frac{1}{4\psi(q)\psi(q')}  \right).
		\]
		Notice that
		\[
		\Delta(q,q')\leq 2qq'\max\{\psi(q)/q,\psi(q')/q'\}.
		\]
		Therefore we have
		\[
		\Delta(q,q')\min\{\psi(q)/q,\psi(q')/q'\}\leq 2\psi(q)\psi(q').
		\]
		Thus, we see that
		\[
		|A_q\cap A_{q'}|\leq 4\psi(q)\psi(q')\left(1+ \frac{C_0}{2H}  \right).
		\]
		This proves the result.
	\end{proof}
	
	From the above result we see that for each $q\geq 1$ we have
	\begin{eqnarray*}
		& &\sum_{1\leq q'<q} |A_q\cap A_{q'}|\\&\leq& 2(2H+1)\sum_{q': \Delta(q,q')<H\gcd(q,q')} \frac{\psi(q)}{q}\gcd(q',q)\chi_{B(0,\Delta(q,q')/\gcd(q,q'))}(\{\gamma (q'-q)/\gcd(q,q')\})\\&+&4(1+C_0/(2H))\sum_{1\leq q'\leq q} \psi(q)\psi(q').
	\end{eqnarray*}
	We now want to estimate the first sum on the RHS in above. We will prove several lemmas which eventually cover all the situations we will meet  later. First, we consider the  case when $\gamma$ is not 'too' Liouville.
	
	\subsection{tamely Liouville numbers}
	Let $\gamma$ be an irrational number. For each integer $Q>0,$ we use $\sigma(Q)>0$ to be the smallest number such that
	\[
	\|q\gamma\|\geq q^{-\sigma(Q)}
	\]
	for all $q\in\{1,\dots Q\}.$ We recall that $\gamma$ is \emph{tamely Liouville} if
	\[
	\sigma(Q)\ll (\log Q)^{1/4-\epsilon}.\tag{tamely Liouville}
	\]
	for a number $\epsilon>0.$
	In particular, recall that the order of the divisor function on the support of $\psi$ (recall Section \ref{modify}) $\log d(.)$ is
	\[
	\log d(q)=O((\log q)^{1/2+\epsilon}).
	\]
	We see that 
	\[
	Q^{1/\sigma^2(Q)}\gg \log Q \max_{q\in\{1,\dots,Q\},\psi(q)\neq 0}d(q).
	\]
	
	We want to closely look at the discrepancy property of the irrational rotation $\{n\gamma\}_{n\geq 1}.$  First, we recall the following result of Erd\H{o}s-Tur\'{a}n-Koksma, see \cite[Theorem 1.21]{DT97}.
	\begin{theorem}[ETK]\label{ETK}
		Let $\alpha$ be an irrational number. Let $N,H$ be positive integers. The the discrepancy of the sequence $\{n\alpha\}_{1\leq n\leq N}$ can be bounded from as follows
		\[
		D_\alpha(N)\leq 3 \left(\frac{1}{H}+2\sum_{0<h\leq H}\frac{1}{Nh}\frac{2}{\|h\alpha\|}\right).
		\]
	\end{theorem}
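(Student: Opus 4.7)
The plan is to reduce this discrepancy bound to an exponential sum estimate via Fourier approximation of characteristic functions, and then exploit the geometric series structure of the irrational rotation. This is the standard route to the Erd\H{o}s--Tur\'{a}n--Koksma inequality, and once the general form is in place the specialization to $x_n=\{n\alpha\}$ is mechanical.

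First I would establish the general one-dimensional Erd\H{o}s--Tur\'{a}n inequality: for \emph{any} finite sequence of points $x_1,\dots,x_N\in [0,1)$ and any positive integer $H$,
\[
D(N)\leq C\left(\frac{1}{H}+\sum_{h=1}^H\frac{1}{h}\left|\frac{1}{N}\sum_{n=1}^N e^{2\pi i h x_n}\right|\right),
\]
with an explicit constant matching the one in the statement. The usual proof is to sandwich the characteristic function $\chi_{[a,b)}$ between a Vaaler (or Selberg) trigonometric majorant and minorant of degree $\le H$; these polynomials are chosen so that their $L^1$-error against $\chi_{[a,b)}$ is $O(1/H)$ and their non-zero Fourier coefficients $\hat{T}(h)$ satisfy $|\hat{T}(h)|\le 1/h$ for $1\le h\le H$. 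Integrating the sandwich inequality against the empirical measure $\frac{1}{N}\sum_n \delta_{x_n}$, subtracting the integral against Lebesgue measure, and taking the supremum over intervals $[a,b)\subset[0,1)$ yields exactly the displayed bound.

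Next I would specialise to the Kronecker sequence $x_n=\{n\alpha\}$. Here the internal exponential sum is a geometric progression:
\[
\frac{1}{N}\sum_{n=1}^N e^{2\pi i h n\alpha}=\frac{1}{N}\cdot e^{2\pi i h\alpha}\cdot\frac{e^{2\pi i h N\alpha}-1}{e^{2\pi i h\alpha}-1}.
\]
Using the elementary inequality $|e^{2\pi i t}-1|=2|\sin\pi t|\ge 4\|t\|$, the sum is bounded in absolute value by $\frac{1}{2N\|h\alpha\|}$, which plugs into the general inequality above to produce the stated form (the factor $2$ in the statement absorbs the transition between $|\sin \pi h\alpha|$ and $\|h\alpha\|$ together with the constants coming from the Vaaler majorant).

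The only real work is the construction of the trigonometric majorant/minorant with the required $L^1$ error and Fourier decay — this is the one non-trivial input, though it is entirely classical and is precisely the content of \cite[Theorem 1.21]{DT97}. After that, the geometric-series estimate and the combination of the two ingredients are routine. So the main obstacle, and indeed the only substantive step, is establishing the Vaaler-type sandwich; everything else is algebraic manipulation.
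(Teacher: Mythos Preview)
Your outline is a correct sketch of the classical proof of the Erd\H{o}s--Tur\'{a}n inequality, followed by the straightforward specialisation to the Kronecker sequence via the geometric-series bound on the Weyl sum. However, the paper does not actually prove this statement: it is simply quoted as a known result, with a reference to \cite[Theorem~1.21]{DT97}, and then used as a black box in the subsequent estimates. So there is no ``paper's own proof'' to compare against; you have supplied a genuine argument where the paper only cites one.
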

	Then we see that for each $N\leq Q$ and $H>0,$
	\[
	D_\alpha(N)\leq 3 \left(\frac{1}{H}+2\sum_{0<h\leq H}\frac{1}{Nh}\frac{2}{\|h\gamma\|}\right).
	\]
	We also have $\|h\gamma\|\geq h^{-\sigma(Q)}.$ From here, we see that
	\[
	\sum_{0<h\leq H}\frac{1}{Nh}\frac{2}{\|h\gamma\|}\leq\frac{2\sigma(Q)}{N}H^{\sigma(Q)}.
	\]
	We can choose $H\approx \sigma(Q)^{-1/(\sigma(Q)+1)}N^{1/(1+\sigma(Q))}$ to minimize the above sum. As a result we see that
	\[
	N D_\gamma(N)\leq 36  \sigma(Q)^{1/(1+\sigma(Q))}N^{\sigma(Q)/(1+\sigma(Q))}
	\] 
	for all $N\leq Q.$ It is possible to see that $\sigma(Q)^{1/(\sigma(Q)+1)}$ is bounded by two. Therefore, for all integers $Q\geq 1$ and all interval $I\subset [0,1],$
	\begin{align*}
	&\left|\#\{1\leq q\leq Q: \{Q\gamma\}\in I\}-Q|I|\right|\leq 72  Q^{\sigma(Q)/(1+\sigma(Q))}.&
	\end{align*}
	Aster this preparation, we can prove the following lemma.
	\begin{lemma}\label{Sum Estimate}[tamely Liouville counting lemma]
		Under the hypothesis of Lemma \ref{master}, suppose further that $\gamma$ is tamely Liouville and $\psi(q)\leq C q^{-1}(\log\log q)^{-2} $ for a number $C>0.$ Then there are constants $C',C''>0$ such that for all $q\geq 16,$
		\[
		\sum_{1\leq q'<q} \frac{\psi(q)}{q}\gcd(q',q)\chi_{B(0,\Delta(q,q')/\gcd(q,q'))}(\{\gamma (q'-q)/\gcd(q,q')\})\leq C' \frac{\psi(q)}{(\log\log q)^{2}}\sum_{r|q} \frac{\log r}{r}+C''\psi(q).
		\]
	\end{lemma}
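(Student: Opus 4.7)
My approach is to group the summation over $q'$ by $r = \gcd(q,q')$ and write $q = rs$, $q' = rs'$ with $\gcd(s,s') = 1$ and $1 \leq s' < s$. Under this substitution the radius $\Delta(q,q')/\gcd(q,q')$ becomes $\rho(s') := s\psi(rs') + s'\psi(rs)$, while $\{\gamma(q'-q)/\gcd(q,q')\}$ becomes $\{\gamma(s'-s)\}$. The left-hand side then equals $(\psi(q)/q)\sum_{r\mid q} r\,\Sigma_r$, where
\[
\Sigma_r = \sum_{\substack{1\leq s' < s\\ \gcd(s,s')=1}} \chi_{B(0,\rho(s'))}(\{\gamma(s'-s)\}),
\]
so the task reduces, after dividing through by $\psi(q)$, to showing $\sum_{r\mid q}\Sigma_r/s \leq C'(\log\log q)^{-2}\sum_{r\mid q}\log r/r + C''$.

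I would then reparametrise by $n = s - s'$, so that the inner sum becomes $\sum_{n}\chi_{\|n\gamma\| < \rho(s-n)}$ with $n \in \{1,\dots,s-1\}$ coprime to $s$, and dyadically decompose this range into blocks $n \in [2^k, 2^{k+1})$. On each block, the hypothesis $\psi(q) \leq C/(q(\log\log q)^2)$ yields a uniform upper bound $\rho^*_{r,k}$ for $\rho(s-n)$. The tamely Liouville hypothesis $\sigma(Q) \ll (\log Q)^{1/4-\epsilon}$, fed into Theorem~\ref{ETK}, gives the discrepancy estimate $ND_\gamma(N) \ll N\exp(-(\log N)^{3/4+\epsilon})$; applied (via translation-invariance) to each dyadic block, this furnishes
\[
\#\{n\in[2^k,2^{k+1}): \|n\gamma\|<\rho^*_{r,k}\} \leq 2^{k+1}\rho^*_{r,k} + 2^k D_\gamma(2^k).
\]
Summing the main term $2^{k+1}\rho^*_{r,k}$ over $k$ produces the logarithmic factor that, after re-assembly across divisors $r\mid q$, delivers the $\sum_{r\mid q}\log r/r$ piece. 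The discrepancy tail $\sum_k 2^k D_\gamma(2^k)$ is superpolynomially small in $\log s$, and after division by $s$ and summation over divisors it is absorbed by the bound $d(q) \leq \exp((\log q)^{1/2+\epsilon})$ available on the support of $\psi$ (Section~\ref{modify}), giving the $C''$ contribution.

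The main obstacle is the regime where $s'$ is very small, in which $s\psi(rs')$ may be of order $1$ so that $B(0,\rho(s'))$ covers a macroscopic portion of the torus and the discrepancy bound becomes vacuous. Here the $\psi$-hypothesis must be inverted: the condition $\rho(s')\gtrsim\rho_0$ forces $s' \ll s/(r\rho_0(\log\log rs')^2)$, so that the number of admissible $s'$ is already bounded directly. Optimising the cutoff $\rho_0$ against the discrepancy main term $2s\rho_0$ consumes exactly one factor of $(\log\log q)^{-2}$ from the hypothesis on $\psi$, producing the stated constant $C'$. The exponent $1/4$ in the definition of tamely Liouville is precisely what is needed to make the discrepancy error summable over all divisors of $q$ once the support of $\psi$ has been trimmed in Section~\ref{modify}.
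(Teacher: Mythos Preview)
Your overall architecture matches the paper's: group by $r=\gcd(q,q')$, write $q=rs$, $q'=rs'$, bound the count $\Sigma_r$ of $s'$ with $\|\gamma(s-s')\|<\rho(s')$ via the Erd\H{o}s--Tur\'{a}n--Koksma discrepancy estimate, and absorb the discrepancy error by playing the tamely-Liouville exponent $\sigma(Q)\ll(\log Q)^{1/4-\epsilon}$ against the divisor bound $d(q)\le 2^{(\log q)^{1/2+\epsilon}}$ on the support of~$\psi$. The paper dyadically decomposes $q'/q\in[2^{-k-1},2^{-k})$ rather than $n=s-s'$; this matters because $\rho(s')$ is controlled by the size of $s'$, and on your block $n\in[2^k,2^{k+1})$ with $2^{k+1}$ near $s$ there is no uniform $\rho^*_{r,k}$.

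The substantive gap is your treatment of the small-$s'$ regime. Balancing a single cutoff $\rho_0$ against the discrepancy main term $2s\rho_0$ gives only
\[
\Sigma_r \ \ll\ \frac{s}{\sqrt{r}\,\log\log q},
\]
so the total contribution is $\asymp\psi(q)(\log\log q)^{-1}\sum_{r\mid q}r^{-1/2}$, and this does \emph{not} imply the stated bound: even on the support of $\psi$ the sum $\sum_{r\mid q}r^{-1/2}$ can be as large as $\exp\bigl((\log q)^{1/4+o(1)}\bigr)$, far beyond $(\log\log q)\sum_{r\mid q}(\log r)/r+O(\log\log q)$. The paper's key device is a \emph{hard threshold}, not an optimisation: it splits the dyadic index at $k=2\log r$. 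For $k>2\log r$ (equivalently $q'<q/r^2$) the trivial count $\#\{q'\le q/r^2:r\mid q'\}\le q/r^3$ sums over $r\mid q$ to at most $\zeta(2)\psi(q)$, which is the $C''\psi(q)$ term. For $k\le 2\log r$ there are only $O(\log r)$ blocks, and on each the main term is $\asymp q/\bigl(r(\log\log q)^2\bigr)$ independently of~$k$; summing produces exactly the factor $(\log r)/r$ while retaining the full $(\log\log q)^{-2}$ saving. A secondary point your sketch omits: to control the discrepancy error over all divisors, the paper uses that $S_{k,r}=0$ whenever $|I_{k,r}|<(q/r)^{-\sigma(q)}$, which forces $r\le q^{(0.5+\sigma)/(1+\sigma)}$ and is what makes $\sum_{r\mid q}(r/q)^{1/(1+\sigma)}$ beat the divisor bound.
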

	\begin{remark}\label{Remark}
		The $(\log\log q)^{-2}$ factor on the RHS comes from the upper bound condition on $\psi.$ Actually, exactly the same arguments would show that if $\psi(q)=O(q^{-1})$ then we have
		\[
		\sum_{1\leq q'<q} \frac{\psi(q)}{q}\gcd(q',q)\chi_{B(0,\Delta(q,q')/\gcd(q,q'))}(\{\gamma (q'-q)/\gcd(q,q')\})=O\left( \psi(q)\sum_{r|q} \frac{\log r}{r}+\psi(q)\right).
		\]
		The extra $(\log\log q)^{-2}$ factor will be important later in the proof of Theorem \ref{Main1}.
	\end{remark}
	\begin{proof}
		We can simply assume that $\psi(q)\leq (q(\log\log q)^2)^{-1}$ for all $q\geq 3$ (where $\log\log q$ is positive) and $\psi(q)=0$ for $q=1,2.$ The multiplicative constant $C$ in the statement will not affect arguments in this proof at all. 
		
		First, we observe that the sum on the LHS can be rewritten as
		\[
		A=\frac{\psi({q})}{q}\sum_{r|q} r \sum_{q': \gcd(q',q)=r}\chi_{B(0,\Delta(q,q')/r)}(\{\gamma (q'-q)/r\}).
		\]
		For each integer $k\geq 0$ we use $D_{k,r}$ to denote the set
		\[
		D_{k,r}=\{1\leq q'<q: \gcd(q',q)=r, q'/q\in [2^{-k-1},2^{-k})\}.
		\]
		Then we see that
		\[
		A=\frac{\psi(q)}{q}\sum_{r|q} r\sum_{k\geq 0} \sum_{q'\in D_{k,r}} \chi_{B(0,\Delta(q,q')/r)}(\{\gamma (q'-q)/r\}).
		\]
		The infinite sum over the index $k$ is actually a finite sum since for large $k$ the set $D_{k,r}$ would be empty. Furthermore, we have the following trivial bound
		\begin{align*}
		\sum_{1\leq q'<q^{1/2}} \frac{\psi(q)}{q}\gcd(q',q)\chi_{B(0,\Delta(q,q')/\gcd(q,q'))}(\{\gamma (q'-q)/\gcd(q,q')\})\leq \frac{\psi(q)}{q}\sum_{1\leq q'\leq q^{1/2}} \gcd(q',q)\\
		=\frac{\psi(q)}{q}\sum_{r|q} r\sum_{q':r| q', 1\leq q'\leq q^{1/2}} 1
		\leq\frac{\psi(q)}{q}\sum_{r|q} r \frac{q^{1/2}}{r}
		=\psi(q)d(q) q^{-1/2}=\psi(q)o(1).\tag{I}
		\end{align*}
		Thus we only need to consider $k$ such that $2^k\leq q^{1/2}$ since otherwise $q'\leq q/2^k\leq q^{1/2}$ and its contribution to the sum is included in Estimate (I). In fact, the number $1/2$ is of no significance. One can replace it by any positive number strictly smaller than $1.$ We just fix the value for convenience. Now we split the sum on $k$ into two parts
		\[
		\sum^{k\leq 0.5\log q}_{k\geq 0}=\sum^{k\leq 0.5\log q}_{k\leq 2\log r}+\sum^{k\leq 0.5\log q}_{k>2\log r}.
		\]
		It can happen that the second sum in above is 0. In general, we bound the second term from above as follows
		\begin{align*}
		\frac{\psi(q)}{q}\sum_{r|q} r\sum_{k> 2\log r}\sum_{q'\in D_{k,r}}\chi_{B(0,\Delta(q,q')/r)}(\{\gamma (q'-q)/r\})\\
		\leq \frac{\psi(q)}{q}\sum_{r|q} r\sum_{q': r|q', 1\leq q'\leq q/r^2} 1\leq \frac{\psi(q)}{q}\sum_{r|q} r \frac{q}{r^{3}}\leq \zeta(2)\psi(q).\tag{II}
		\end{align*}
		In what follows, we always have $k\leq 0.5\log q$ and we do not explicitly write it down. We now estimate the $\sum_{k\leq 2\log r}$ term.

		First, we have for $q'\in D_{k,r}$
		\[
		\Delta(q',q)\leq 2\frac{2^{k+1}}{\log\log^2 (q/2^{k+1})}.
		\]
		Therefore, we see that
		\[
		\sum_{q'\in D_{k,r}}\chi_{B(0,\Delta(q,q')/r)}(\{\gamma (q'-q)/r\})\leq \sum_{q'\in D_{k,r}} \chi_{B(0,2^{k+2}/(r\log\log^2 (q/2^{k+1})))}(\{\gamma (q'-q)/r\}).
		\]
		Observe that
		\[
		D_{k,r}\subset \{q': r|q', q'/q\in [2^{-k-1},2^{-k})\}=\{rs: s\geq 1, rs/q\in [2^{-k-1},2^{-k})\}.
		\]
		We need to count the number $S_{k,r}$ of $s\geq 1, rs/q\in [2^{-k-1},2^{-k})$ such that $\{\gamma(s-qr^{-1})\}$ is contained in
		\[
		I_{k,r}=B(0,2^{k+2}/(r\log\log^2 (q/2^{k+1}))).
		\]
		We remark that the requirement that 
		\[
		\frac{q}{2^{k+1} r}\leq s< \frac{q}{2^k r}
		\]
		could lead to null choice of $s.$ We treat the case when $q/(2^{k}r)>1.$ Otherwise, there is nothing to consider. In this case, we see that there are at most $1+q/(2^{k+1}r)\leq 3q/(2^{k+1}r)$ many integers $s$ in the above range. 
		
		Since $\gamma$ is tamely Liouville, we can find the corresponding $\sigma$ function for $\gamma$ which grows slowly. As we have seen before, for any set $\mathcal{N}$ of $n\geq 2$ consecutive integers  and any interval $I\subset [-1/2,1/2],$
		\[
		|\#\{m\in \mathcal{N}: \{\gamma m\}\in I\}-|I|n|\leq 72 n^{\sigma(n)/(1+\sigma(n))}.\tag{R1}
		\]
		Moreover, we have for $q\in\{1,\dots, n\}$
		\[
		\|q\gamma\|\geq q^{-\sigma(n)}.\tag{R2}
		\]
		From here, we see that
		\begin{align*}
		S_{k,r}\leq |I_{k,r}|\frac{3q}{2^k r}+72 \left(\frac{3q}{2^k r}\right)^{\sigma(3q)/(1+\sigma(3q))}.\tag{Est}
		\end{align*}
		More precisely, the variable in the above $\sigma(.)$ should be $[3q/(2^kr)].$ However, as $\sigma(.)$ is increasing, we see that the above inequality holds. Moreover, $S_{k,r}=0$ if
		\[
		|I_{k,r}|\leq (q/r)^{-\sigma(q)}.
		\]
		To see this, observe that as $s$ ranging over $\{1,\dots,q/r-1\}$, the value of $\{\gamma(s-q/r)\}$ ranges over $\{\gamma\},\dots,\{\gamma(q/r-1)\}.$ Then we use (R2) to conclude the above claim.
		Thus we can assume that
		\[
		(q/r)^{-\sigma(q)}\leq 2\frac{2^{k+1}}{r\log\log^2 (q/2^{k+1})}.
		\]
		Recall that $2^k\leq q^{1/2}.$ Therefore we have
		\[
		r^{\sigma(q)+1}\leq 16 q^{0.5+\sigma(q)}\frac{1}{(\log\log q)^2}.
		\]
		For large enough $q,$ we have
		\[
		r\leq q^{(0.5+\sigma(q))/(1+\sigma(q))}.
		\]
		Next, we examine the contribution of second term of (Est) to $A,$ 
		\begin{align*}
		&\frac{\psi(q)}{q}\sum_{r|q}r \sum_k \left(\frac{3q}{2^k r}\right)^{\sigma(3q)/(1+\sigma(3q))}&\\
		&\ll \psi(q)\log q \sum_{r|q} \left(\frac{r}{q}\right)^{1/(\sigma(3q)+1)}.&
		\end{align*}
		The divisor sum $\sum_{r|q}$ in above can be further reduced because we only need to consider the divisors $r$ with $r\leq q^{(0.5+\sigma(q))/(1+\sigma(q))}\leq q^{(0.5+\sigma(3q))/(1+\sigma(3q))}.$ Therefore we see that
		\begin{align*}
		&\frac{\psi(q)}{q}\sum_{r|q}r \sum_k \left(\frac{3q}{2^k r}\right)^{\sigma(3q)/(1+\sigma(3q))}&\\
		&\ll \psi(q)(\log q )d(q) q^{-0.5/(\sigma(3q)+1)^2}.&
		\end{align*}
		From our condition (the tamely Liouville condition) on the growth rate of $\sigma(.),$ we see that
		\[
		q^{0.5/(\sigma(3q)+1)^2}\gg d(q)\log q.
		\]
		Indeed, we have certainly $\psi(q)\neq 0.$ Otherwise there is nothing to consider. However, recall Section \ref{modify},if $\psi(q)\neq 0,$ then $\log d(q)\leq (\log q)^{1/2+\epsilon}.$ This implies that
		\begin{align*}
		&\frac{\psi(q)}{q}\sum_{r|q}r \sum_k \left(\frac{3q}{2^k r}\right)^{\sigma(3q)/(1+\sigma(3q))}=\psi(q)O(1).&\tag{III}
		\end{align*}
		Next, we examine the first term in (Est),
		\begin{align*}
		&\frac{\psi(q)}{q}\sum_{r|q}r \sum_k 2\frac{2^{k+1}}{r\log\log^2 (q/2^{k+1})} \frac{3q}{2^k r}&\\
		&\ll \psi(q) \frac{1}{(\log\log q)^2}\sum_{r|q} \frac{\log r}{r}.&\tag{IV}
		\end{align*}
		Collecting the estimates (I), (II), (III), (IV) and reindexing the constants, we see that for two suitable constants $C',C''>0,$
		\[
		\frac{\psi({q})}{q}\sum_{r|q} r \sum_{q': \gcd(q',q)=r}\chi_{B(0,\Delta(q,q')/r)}(\{\gamma (q'-q)/r\})\leq C'\psi(q)\frac{1}{\log\log^2 q} \sum_{r|q} \frac{\log r}{r}+C''\psi(q).
		\]
		This finishes the proof.
	\end{proof}
	\subsection{A lemma for wildly Liouville $\gamma$}
	Next, we consider the case when $\gamma$ is a tamely Liouville number. We now consider wildly Liouville numbers. Here, we note that we can consider a rational number as being wildly Liouville in what follows. As $\gamma$ is Liouville, there exists a function $\sigma:\mathbb{N}\to \mathbb{N}$ such that there are infinitely many integers $Q$ with
	\[
	\|Q\gamma\|< Q^{-\sigma(Q)},
	\]
	where we require that $\sigma(Q)$ are positive integers and $\sigma(Q)\to\infty$ as $Q\to\infty.$ Unlike what we did for tamely Liouville numbers here the exponents $\sigma(.)$ are not necessarily to be the best fit exponents. For example, it can happen that
	$\|Q\gamma\|\leq Q^{-100\sigma(Q)-100}.$ However, since $\gamma$ is wildely Liouville, we necessarily have a lower bound
	\[
	\sigma(Q)\gg (\log Q)^{1/4-\epsilon}\tag{wildly Liouville}
	\]
	for all $\epsilon>0.$ Thus in particular, $\sigma(Q)\gg (\log Q)^{1/8}.$ We can construct the infinite set
	\[
	L_\gamma=\{Q\in\mathbb{N}: \|Q\gamma\|<Q^{-\sigma(Q)}\}.
	\]
	Here the possible choices for $\sigma(Q)$ depends in general on $\gamma.$ In addition to the condition that $\sigma(Q)\to\infty$ as shall also require that
	\[
	(\max_{q\in\{1,\dots,Q^{\sigma(Q)/2}\},\psi(q)\neq 0}d(q))\log Q=o(Q^{2/\sigma(Q)}).\tag{Slow growing}
	\]
	This is certainly possible as $d(Q)\log Q=o(Q^{\epsilon})$ for all $\epsilon>0.$ Thus we need that $\sigma(Q)$ grows in a sufficiently slow manner. However, we need to be sure that this will not contradict (Wildly Liouville). The see this, we observe that (Slow growing) says that for a positive number $\epsilon$ which can be chosen to be arbitrarily small,
	\[
	\sigma(Q)\ll (\log Q)^{1-\epsilon}.
	\]
	Thus we still have some extra rooms for the choice of $\sigma(.).$ We shall fix one such a function $\sigma$ for each wildly Liouville $\gamma.$ If $\gamma$ is rational, we see that $L_\gamma$ is simply $k\mathbb{N}$ for an integer $k\geq 1.$ We will first prove the following lemma as we essentially met all the arguments. In what follows, we assume without loss of generality that $\sigma(Q)>14$ for all integers $Q.$
	\begin{lemma}\label{Liouville}[wildly Liouville counting lemma]
		Under the hypothesis of Lemma \ref{master}, suppose further that $\gamma$ is Liouville and $\psi(q)\leq C q^{-1}(\log\log q)^{-2} $ for a number $C>0.$ Then there are constants $C',C''>0$ such that 
		\[
		\sum_{1\leq q'<q} \frac{\psi(q)}{q}\gcd(q',q)\chi_{B(0,\Delta(q,q')/\gcd(q,q'))}(\{\gamma (q'-q)/\gcd(q,q')\})\leq C' \frac{\psi(q)}{(\log\log q)^{2}}\sum_{r|q} \frac{\log r}{r}+C''\psi(q)
		\]
		for $q\in [Q^7,Q^{\sigma(Q)/2}],$ where $Q\in L_\gamma.$
	\end{lemma}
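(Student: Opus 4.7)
The plan is to mimic the proof of Lemma \ref{Sum Estimate}, but to replace the Erd\H{o}s--Tur\'{a}n--Koksma discrepancy bound (which relied on the lower bound $\|h\gamma\| \geq h^{-\sigma(Q)}$ throughout $1 \leq h \leq Q$) with an explicit rational approximation of $\gamma$ and a residue-counting argument modulo $Q$. For $Q \in L_\gamma$, pick the integer $P$ with $|Q\gamma - P| < Q^{-\sigma(Q)}$ and write $\gamma = P/Q + \delta$, so $|\delta| < Q^{-\sigma(Q)-1}$. Passing to the reduction $P/Q = P'/Q'$ in lowest terms only strengthens the hypothesis (the reduced denominator $Q' \leq Q$ satisfies $\|Q'\gamma\| < (Q')^{-\sigma(Q)}$, and the range $[Q^7, Q^{\sigma(Q)/2}]$ of the lemma is preserved because $q \geq Q^7 \geq (Q')^7$), so we may assume $\gcd(P, Q) = 1$.

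As in the proof of Lemma \ref{Sum Estimate}, the contribution from $q' \leq q^{1/2}$ is absorbed by the trivial estimate (I) and that from $k > 2\log r$ by estimate (II). In the main range, we must bound
\[
S_{k,r} = \#\{s : q/(2^{k+1}r) \leq s < q/(2^k r),\ \{\gamma(s - q/r)\} \in I_{k,r}\}.
\]
Because $q \leq Q^{\sigma(Q)/2}$, the perturbation satisfies $|(s - q/r)\delta| \leq q \cdot Q^{-\sigma(Q)-1} \leq Q^{-\sigma(Q)/2 - 1}$, so $\{\gamma(s - q/r)\} \in I_{k,r}$ forces $\{(s - q/r)P/Q\} \in \widetilde{I}_{k,r}$, where $\widetilde{I}_{k,r}$ is $I_{k,r}$ enlarged by $Q^{-\sigma(Q)/2-1}$ on each side. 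Since $\gcd(P, Q) = 1$, multiplication by $P$ is a bijection of $\mathbb{Z}/Q\mathbb{Z}$, so the number $T$ of residues $j$ with $\{j/Q\} \in \widetilde{I}_{k,r}$ satisfies $T \leq Q|\widetilde{I}_{k,r}| + 1$. In any interval of length $N = q/(2^{k+1}r)$, each residue class modulo $Q$ is represented at most $\lceil N/Q \rceil$ times, so
\[
S_{k,r} \leq T \lceil N/Q \rceil \leq N|\widetilde{I}_{k,r}| + \frac{N}{Q} + Q|\widetilde{I}_{k,r}| + 1.
\]

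Plugging this into the expression for $A$ and carrying out the summations over $k$ and over $r \mid q$ in parallel with the proof of Lemma \ref{Sum Estimate}, the main term $N|\widetilde{I}_{k,r}|$ reproduces estimate (IV) and delivers the desired $(\log\log q)^{-2}\sum_{r\mid q}(\log r)/r$ factor. Each of the three error contributions must be shown to be $O(\psi(q))$, and this is exactly the role played by the hypotheses $q \geq Q^7$ and (Slow growing). The former gives $Q \leq q^{1/7}$, while the latter forces $d(q) \leq Q^{2/\sigma(Q)}$ on the support of $\psi$; in combination, $d(q)/Q \leq Q^{-c}$ for some $c > 0$ once $Q$ is sufficiently large. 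The $Q|I_{k,r}|$ piece is then controlled by $\sum_k 2^k \ll q^{1/2}$ together with $Q \leq q^{1/7}$, the $N/Q$ piece by $d(q)/Q$, and the contribution from the enlargement $Q \cdot Q^{-\sigma(Q)/2-1}$ is exponentially small.

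The main obstacle is controlling the $N/Q$ error, which naturally blows up when $N$ is much smaller than $Q$; this is precisely where the lower bound $q \geq Q^7$ is essential, paired with (Slow growing) to keep the divisor function small relative to $Q$. Everything else is a direct transcription of the tamely Liouville argument, with the residue bijection modulo $Q$ playing the role that Erd\H{o}s--Tur\'{a}n--Koksma played in Lemma \ref{Sum Estimate}.
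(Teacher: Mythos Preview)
Your approach mirrors the paper's---rational approximation $\gamma=P/Q+\delta$ and residue counting modulo $Q$ in place of Erd\H{o}s--Tur\'an--Koksma---and the main term $N|\tilde I_{k,r}|$ together with the errors $N/Q$ and $Q|\tilde I_{k,r}|$ are handled correctly. There is, however, a genuine gap: the additive $+1$ in
\[
S_{k,r}\;\le\;N|\tilde I_{k,r}|+\frac NQ+Q|\tilde I_{k,r}|+1
\]
is not among the three errors you list, and it is \emph{not} $O(\psi(q))$ after summation. Its contribution to the outer sum is
\[
\frac{\psi(q)}{q}\sum_{r\mid q}r\cdot\#\{k\},
\]
and for (say) even $q$ the single divisor $r=q/2$ already yields $\asymp\psi(q)\log q$. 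Your diagnosis that ``the main obstacle is the $N/Q$ error'' is also misplaced: that term sums geometrically in $k$ to $O(\psi(q)\,d(q)/Q)$, which is $o(\psi(q))$ by (Slow growing) alone, with no appeal to $q\ge Q^7$.

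The paper repairs exactly this point via a dichotomy. In case (B1), where $|I_{k,r}|<1/(2Q)$, only the residue $0\pmod Q$ survives, and one bounds the inner count over $D_{k,r}$ by the number of multiples of $Q$ in the \emph{full} range $(0,q/r)$, namely $\lfloor q/(rQ)\rfloor$---a floor, carrying no additive $+1$; this sums to $\psi(q)\,d(q)\log q/Q=o(\psi(q))$. In case (B2), one has $Q|I_{k,r}|\ge 1/2$, which absorbs one $+1$, while the hypothesis $q\ge Q^7$ (combined with $2^{2k}\le q^{2\kappa}$, $\kappa<1/3$) forces $N/Q\gg 1$, absorbing the other. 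Inserting this split into your argument recovers the paper's proof; without it the bound fails by a factor of $\log q$.
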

	\begin{remark}\label{Remark2}
		Here, we have a similar conclusion as Remark \ref{Remark}. 
	\end{remark}
	\begin{proof}
		We assume that $Q\geq 2^{1000}.$ Since $Q\in L_\gamma,$ we see that $\gamma=v/Q+err$ where
		\[
		v\in\mathbb{N}, \gcd(v,Q)=1, |err|<Q^{-\sigma(Q)}.
		\]
		Let $q',q$ be integers smaller than $Q^{\sigma(Q)/2}.$ Then we have either
		\[
		\|\gamma (q'-q)/\gcd(q',q)\|>\frac{1}{Q}-\frac{2}{Q^{\sigma(Q)/2}}>\frac{1}{2Q}
		\] 
		or
		\[
		\|\gamma (q'-q)/\gcd(q',q)\|\leq \frac{2}{Q^{\sigma(Q)/2}}.
		\]
		The former happens precisely when $v (q'-q)/\gcd(q',q)$ is not a multiple of $Q,$ i.e. $(q'-q)/\gcd(q',q)$ is not a multiple of $Q.$ In this case, we see that
		\[
		\gamma (q'-q)/\gcd(q',q)
		\]
		is $2/Q^{\sigma(Q)/2}$-close to a rational number with denominator $Q.$
		
		Our goal is to estimate the following sum
		\[
		\sum_{1\leq q'<q} \gcd(q',q)\chi_{B(0,\Delta(q,q')/\gcd(q,q'))}(\{\gamma (q'-q)/\gcd(q,q')\}).
		\]
		Now the situation is a bit simpler than the non-Liouville case as now $\gamma$ can be effectively considered as being rational ($\approx v/Q$). By the arguments at the beginning of the proof of Lemma \ref{Sum Estimate}, which do not rely on the extra non-Liouville condition for $\gamma,$ we see that it is again enough only to estimate the following sum for a $\kappa\in (0,1/3),$
		\begin{align*}
		\frac{\psi(q)}{q}\sum_{r|q} r\sum^{k\leq \kappa\log q}_{k< 2\log r}\sum_{q'\in D_{k,r}}\chi_{B(0,\Delta(q,q')/r)}(\{\gamma (q'-q)/r\}).\tag{SUM}
		\end{align*}
		We recall that for each integer $k\geq 0,$ 
		\[
		D_{k,r}=\{1\leq q'<q: \gcd(q',q)=r, q'/q\in [2^{-k-1},2^{-k})\}.
		\]
		We have
		\[
		D_{k,r}\subset \{1\leq q'<q: r|q', q'/q\in [2^{-k-1},2^{-k})\}.
		\]
		The set on the RHS in above is an arithmetic progression with gap $r$ of length at most
		\[
		\left[\frac{q}{r2^k}\right]+1.
		\]
		As we have for a constant $C>0,$
		\[
		\Delta(q,q')/r\leq \frac{2q}{q' (\log\log q')^2 r}\leq 2 \times 2^{k}\frac{C}{(\log\log q)^2 r}
		\]
		for $q'\in D_{k,r}$ with $k\leq \min\{\kappa\log q,2\log r\}.$ First, suppose that 
		\begin{align*}
		r>2^{k+2}\frac{C}{(\log\log q)^2}\times Q.\tag{B1}
		\end{align*}
		Then we see that
		\[
		\Delta(q',q)/r<\frac{1}{2Q}.
		\]
		In this case, we claim that
		\[
		\sum_{q'\in D_{k,r}}\chi_{B(0,\Delta(q,q')/r)}(\{\gamma (q'-q)/r\})\leq \left[\frac{q/r}{Q}\right].
		\]
		To see how the RHS in above follows, recall that we need to consider $q'$ being multiples of $r.$ Then we see that under the condition (B1),
		\begin{align*}
		&\sum_{q'\in D_{k,r}}\chi_{B(0,\Delta(q,q')/r)}(\{\gamma (q'-q)/r\}&\leq\sum_{r|q', q'\in [1,q)}\chi_{B(0,1/2Q)}(\{\gamma (q'-q)/r\})\\
		&=\sum_{m\in (0,q/r)}\chi_{B(0,1/2Q)}(\{\gamma m\}).&
		\end{align*}
		By what we have discussed in above, $\chi_{B(0,1/2Q)}(\{\gamma m\})$ can be non-zero only when
		$
		Q|m.
		$
		This finishes the proof of the claim. Thus for constants $C'',C'''>0,$ the contribution to (SUM) in this case is
		\begin{align*}
		&\leq C''\frac{\psi(q)}{q}\sum_{r|q}r \sum_{2^k\leq \min\{r^2,q^{1/3}\}} \frac{q}{rQ}&\\
		&\leq C''' \psi(q)\frac{\log q}{Q}d(q).&\tag{I}
		\end{align*}
		As we have $q\leq Q^{\sigma(Q)/2},$ we see that $Q\geq q^{2/\sigma(Q)}.$ Since $d(q)\log q=o(q^{2/\sigma(Q)})$, we see that (I) can be bounded by
		\[
		\psi(q)o(1).
		\]
Next, we consider the case when
		\begin{align*}
		r\leq 2^{k+2}\frac{C}{(\log\log q)^2}\times Q.\tag{B2}
		\end{align*}
		In this case we have
		\[
		2\times 2^{k}\frac{C}{(\log\log q)^2 r}\geq \frac{1}{2Q}.
		\]
		For convenience, we write
		\[
		\min\left\{1,2\times 2^{k}\frac{C}{(\log\log q)^2 r}\right\}=T_{k,r}.
		\]
		Next, observe that under the condition (B2), we have
		\[
		\frac{q}{2^k r}\geq \frac{q(\log\log q)^2}{2^{2k+2}CQ}.
		\]
		Since $2^{2k}\leq q^{2/3},$ we see that (recall that $q\geq Q^7$)
		\begin{align*}
		\frac{q}{2^k r}\geq \frac{(\log\log q)^2}{4C} \frac{q^{1/3}}{Q}\geq  \frac{(\log\log q)^2}{4C} Q^{2}.\tag{@}
		\end{align*}
		This is much larger than $Q$ if $Q$ is large enough. Then we see that for consants $C'''',C'''''>0$
		\begin{align*}
		&\sum_{q'\in D_{k,r}}\chi_{B(0,\Delta(q,q')/r)}(\{\gamma (q'-q)/r\}&\leq\sum_{r|q', q'\in [q/2^{k+1},q/2^k]}\chi_{B(0,T_{k,r})}(\{\gamma (q'-q)/r\})\\
		&\leq C''''\left(\left[\frac{T_{k,r}}{1/(2Q)}\right]+1\right)\left(\left[\frac{\frac{q}{2^k r}+1}{Q}\right]+1\right)&\tag{@@}\\
		&\leq C''''' \frac{T_{k,r}}{1/Q}\frac{q/(2^k r)}{Q}&\\
		&\leq q\frac{2CC'''''}{(\log\log q)^2}\frac{1}{r^2}.&
		\end{align*}
		Thus the contribution to (SUM) is at most
		\[
		\frac{4CC'''''}{(\log\log q)^2}\psi(q)\sum_{r|q} \frac{\log r}{r}.
		\]
		From here, together with the estimate (I) and reindexing the constants, we see that the lemma is proved.
	\end{proof}

	\subsection{Proofs of the main results}
	\begin{proof}[Proof of Theorem \ref{Main1}]
		Define the arithmetic function $F$ by
		\[
		F(q)=\sum_{r|q} \frac{\log r}{r}.
		\]
		Let $Q$ be a large integer and let $K_Q$ be a positive integer whose value depends on $Q.$ We see that
		\begin{eqnarray*}
			\sum_{q=1}^Q F^{K_Q}(q)&=&\sum_{q=1}^Q \sum_{r_1,r_2,\dots,r_{K_Q}|q} \prod_{i=1}^{K_Q} \frac{\log r_i}{r_i}\\
			&=& \sum_{r_1,r_2,\dots,r_{K_Q}\leq Q}  \prod_{i=1}^{K_Q} \frac{\log r_i}{r_i}\sum_{q: [r_1,\dots,r_{K_Q}]| q} 1\\
			&\leq& \sum_{r_1,r_2,\dots,r_{K_Q}\leq Q}  \prod_{i=1}^{K_Q} \frac{\log r_i}{r_i} \frac{Q}{[r_1,\dots,r_{K_Q}]}.
		\end{eqnarray*}
		As $[r_1,\dots,r_{K_Q}]\geq (r_1r_2\dots r_{K_Q})^{1/K_Q},$ we have
		\[
		\sum_{q=1}^Q F^{K_Q}(q)\leq Q\left(\sum_{1\leq r\leq Q} \frac{\log r}{r^{1+K^{-1}_Q}}\right)^{K_Q}.
		\]
		Observe that there is a constant $C$ with
		\[
		\sum_{1\leq r\leq Q} \frac{\log r}{r^{1+K^{-1}_Q}}\leq \sum_{r\geq 1}  \frac{\log r}{r^{1+K^{-1}_Q}}=-\zeta'(1+K^{-1}_Q)\leq C K^2_Q.
		\]
		Thus we see that
		\[
		\sum_{q=1}^Q F^{K_Q}(q)\leq Q(CK^2_Q)^{K_Q}.
		\]
		Then we see that
		\[
		\#\{q\leq Q: F(q)> 2CK^2_Q\}\leq Q \frac{1}{2^{K_Q}}.
		\]
		We choose $K_Q=2\log\log Q$ and this makes
		\[
		\#\{q\leq Q: F(q)> 4C\log\log^2 Q\}\leq \frac{Q}{\log^2 Q}.
		\]
		Since $\psi(q)=O(q^{-1}(\log\log q)^{-2})$ we see that there is a constant $C'>0$ such that
		\[
		\sum_{q: F(q)>4C \log\log^2 q} \psi(q)=\sum_{k\geq 1} \sum_{q: F(q)>4C \log\log^2 q, 2^{k-1} \leq q\leq 2^k} \psi(q)\leq  C'\sum_{k\geq 1}\frac{2^k}{2^{k-1}} \frac{1}{k^2}<\infty.
		\]
		Thus, we can assume that $\psi$ is supported on where $F(q)<4C \log\log^2 q.$ From here and Lemmas \ref{master}, \ref{Sum Estimate}, we see that
		\begin{eqnarray*}
			\sum_{1\leq q'<q} |A_q\cap A_{q'}|&\leq& 4\left(1+\frac{C_0}{2H}\right) \sum_{1\leq q'\leq q}\psi(q)\psi(q')+C''\psi(q)+C' \psi(q)\frac{F(q)}{\log\log^2 q}\\&\leq& 4\left(1+\frac{C_0}{2H}\right) \sum_{1\leq q'\leq q}\psi(q)\psi(q')+C''\psi(q)+4C'C \psi(q).
		\end{eqnarray*}
		From above and the assumption $\sum_q \psi(q)=\infty$ we see that
		\[
		\frac{(\sum_{q\leq Q} |A_q|)^2}{\sum_{q,q'\leq Q} |A_q\cap A_{q'}|}\geq\frac{(\sum_{q=1}^Q 2\psi(q))^2}{4\left(1+\frac{C_0}{2H}\right) (\sum_{q=1}^Q\psi(q))^2+O(\sum_{q=1}^Q \psi(q))}\geq \frac{1}{1+\frac{C_0}{2H}+o(1)}.
		\]
		By Lemma \ref{Borel}, we see that $|W(\psi,\gamma)|=|\limsup_{q\to\infty} A_q|\geq (1+C_0/(2H))^{-1}.$ As $H$ can be chosen to be arbitrarily large, we see that
		\[
		|W(\psi,\gamma)|=1.
		\]
	\end{proof}

	\begin{proof}[Revisiting Sz\"{u}sz's theorem]
		As $\psi$ is non-increasing, if for an integer $q\geq 100$ we have $\psi(q)\geq 1/q,$ then we have
		\[
		\psi(q')\geq \frac{1}{2q'}
		\]
		for all $q'\in [q/2,q].$ In this case, our strategy is to shrink $\psi(q')$ to $1/(2q')$ for $q'\in [q/2,q].$ More precisely, we find $q_1$ the first integer $\geq 100$ with $\psi(q_1)\geq 1/q_1.$ Then we shrink $\psi$ at $[q_1/2,q_1].$ Next, we find $q_2,$ the first integer $>q_1$ with $\psi(q_2)\geq 1/q_2.$ We then shrink $\psi$ at $[\max\{q_1+1,q_2/2\},q_2].$ In the end, we obtain a new approximation function $\psi'\leq \psi$ such that $\psi'(q)=O(1/q).$
		
		Suppose that the above shrinking procedure was performed infinitely many times. In particular, we can find numbers $s_1<s_2<\dots$ with $2s_i<s_{i+1}$  and $\psi(s_i)>1/s_i$ for $i\geq 1.$ Then the modified approximation function $\psi'$ satisfies
		\[
		\psi'(q')\geq \frac{1}{2s_i}
		\]
		for $q'\in [s_i/2,s_i].$ Thus $\sum_{q'\in [s_i/2,s_i]}\psi'(q')\geq 1/4.$ From here we see that
		\[
		\sum_{q} \psi'(q)=\infty.
		\]
		
		Now we want to sieve out the support of $\psi'$ further. Recall the function $F$ used before in the proofs of Theorems \ref{Main1},
		\[
		F(q)=\sum_{r|q} \frac{\log r}{r}.
		\]	
		We have for a constant $C>0$ that
		\[
		\#\{q\leq Q: F(q)>2CK^2_Q\}\leq Q\frac{1}{2^{K_Q}}
		\]
		holds for each $Q\geq 100$ and $K_Q\geq 100.$ Now we simply choose $K_Q$ to be a fixed large number $K$ and we restrict $\psi$ to
		\[
		G_K=\{q\leq Q: F(q)\leq 2CK^2\}.
		\]
		This set has natural lower density at least $1-1/2^K.$ Moreover, for each integer $q>100$, the set $G^c_K\cap [q/2,q]$ contains at most $q/c_K$ elements, where $c_K>1$ and it can be made to be arbitrarily large by choosing $K$ to be sufficiently large. We can restrict $\psi'$ on $G_K$ and we denote this new approximation function as $\psi'_K.$ Observe that $F(q)$ is bounded for $q\in G_K$. Instead of Lemma \ref{Sum Estimate}, we can use Remark \ref{Remark} in the proof of Theorem \ref{Main1}. 
		
		Notice that we have not used any Liouville conditions for $\gamma.$ We now consider $W(\psi'_K,\gamma).$ Following the arguments in the proofs of Theorem \ref{Main1}, we see that
		\[
		|W(\psi,\gamma)|=1.
		\]

		\begin{comment}
		Let $M>1$ be an integer. We choose $K$ to be large enough such that $M_K=M\mathbb{N}\cap G_K$ is a large set. More precisely, we need $M_K\cap [q/2,q]$ to contain at least $\epsilon_K q$ elements for all sufficiently large $q$. This can be achieved by choosing $K$ to be sufficiently large. After fixing the value of $K$ (according to $M$), we now consider $W(\psi'_K,\gamma).$ Observe that the construction in above ensures that \[\sum_{q\in M_K} \psi'_K(q)=\infty.\]
		\end{comment}

		\begin{comment}
		we see that for each interval $I\in\mathcal{I}_M,$
		\[
		|I\cap W(\psi,\gamma)|\geq |I\cap W(\psi',\gamma)|\geq |I\cap W(\psi'_K,\gamma)|\geq \frac{1}{4}.
		\]
		As $M$ can be chosen to be arbitrarily large, we conclude that
		\[
		|W(\psi,\gamma)|=1.
		\]
		\end{comment}
		Now, we need to deal with the case when the shrinking procedure above cannot be performed infinitely often. This implies that $\psi(q)<1/q$ except for at most finitely many values of $q.$ Thus in this case, we have $\psi(q)=O(1/q).$ Again, we choose $K$ such that $G_K$ has positive lower density. This can be done by choosing $K$ to be sufficiently large. As $\psi$ is non-increasing, we see that
		\[
		\sum_{q\in G_K}\psi(q)=\infty.
		\]
		Consider the restricted approximation function $\psi_K.$ As in above we see that  $|W(\psi,\gamma)|=1.$ From here the proof is finished.
	\end{proof}
	
	As a direct consequence of Theorem \ref{Main1}, we now illustrate the following.
	\begin{proof}[Proof of Corollary \ref{MUL1}]
		Consider the following set
		\[
		B=\{q\in\mathbb{N}: \|q\beta-\gamma_2\|\geq (\log q)^{-1}\}.
		\]
		Then on this set $B$, we have
		\[
		\psi'(q)=\frac{\psi(q)}{\|q\beta-\gamma_2\|}=O((q(\log\log q)^2)^{-1}).
		\]
		We extend $\psi'$ by setting $\psi'(q)=0$ whenever $q\notin B.$ Then we have
		\[
		W(\psi',\gamma_1)\subset W(\psi,\beta,\gamma_1,\gamma_2).
		\]	
		Since we have
		\[
		\sum_{q\in B} \psi'(q)=\infty,
		\]
		we can use Theorem \ref{Main1} to conclude the result.
	\end{proof}
	We now provide examples such that the condition (D) in the statement of Theorem \ref{MUL1} is satisfied. First, we want to analyse the set $B$ constructed in the previous proof. Let $\beta$ be an irrational algebraic number and $\gamma$ be a real number. We want to understand the set
	\[
	B=\{q\in\mathbb{N}: \|q\beta-\gamma_2\|\geq (\log q)^{-1}\}.
	\]
	To do this, let $k,l$ be natural numbers and we consider
	\[
	B_{k,l}=\{q\in [2^k,2^{k+1}]: \|q\beta-\gamma_2\|\in [2^l/k, 2^{l+1}/k]\}.
	\]
	We need to estimate from below the cardinality of $B_{k,l}.$ The interesting case would be $2^{l}\leq k.$ As $\beta$ is algebraic, it is not Liouville. We can use Lemma \ref{Discrepancy}. As a result, we see that there are numbers $c,M>0$ such that for all $k\geq M$ and $l\leq \log k$ we have
	\[
	\#B_{k,l}\geq c 2^k \frac{2^{l}}{k}.
	\]
	We let \footnote{This choice of $\psi$ happens to be monotonic. However, this is not essential in the argument.}
	\[
	\psi(q)=\frac{1}{q \log q(\log\log q)^2}.
	\]
	Then we see that for $k\geq M, l\leq \log k,$
	\[
	\sum_{q\in B_{k,l}} \frac{\psi(q)}{\|q\beta-\gamma_2\|}\geq c 2^k \frac{2^l}{k} \frac{1}{2^{k+1}(k+1)(\log (k+1))^2} \frac{k}{2^{l+1}}=\frac{c}{4}\frac{1}{(k+1)(\log (k+1))^2}.
	\]
	Then we have
	\[
	\sum_{q\in B\cap [2^k,2^{k+1}]} \frac{\psi(q)}{\|q\beta-\gamma_2\|}\geq \sum^{l\leq \log k}_{l=0} \frac{c}{4}\frac{1}{(k+1)(\log (k+1))^2}\geq \frac{c}{8} \frac{1}{k\log k}
	\]
	for all sufficiently large $k.$ Thus we conclude that
	\[
	\sum_{q\in B} \frac{\psi(q)}{\|q\beta-\gamma_2\|}=\infty.
	\]
	\section{Higher dimensional approximations}
	We will prove Theorem \ref{HIGH}.  Let $H,C_0$ be as in Lemma \ref{master}. Let $\psi$ be an approximation function with the required divergence condition. For each $q\geq 2,$ we consider
	\[
	B_q=\prod_{i=1}^k A^{\psi,\gamma_i}_q.
	\]
	We want to study the set $\limsup_{q\to\infty} B_q.$ Now, observe that the Lebesgue measure of $B_q$ is simply
	\[
	(2\psi(q))^k
	\]
	if $\psi(q)\leq 1/2.$ Otherwise, the Lebesgue measure is $1.$ As before, we will always assume $\psi(q)<1/2,$ or else the result follows trivially. We can use Lemma \ref{master} for each component. Write for integers $i\in\{1,\dots,k\}$ and $q,q'\geq 2,$
	\begin{align*}
	L_i(q,q')=2(2H+1)\min\{\psi(q)/q,\psi(q')/q'\}\gcd(q,q')\chi_{B(0,\Delta(q,q')/\gcd(q,q'))}(\{\gamma_i(q'-q)/\gcd(q,q')\})
	\end{align*}
	if $\Delta(q,q')\leq H\gcd(q,q').$ Otherwise, we define.
	\[
	L_i(q,q')=4(1+C_0/(2H))\psi(q)\psi(q').
	\]
	Then we see that
	\[
	|B_q\cap B_{q'}|\leq \prod_{i=1}^k L_i(q,q').
	\]
	When $\Delta(q,q')\leq H\gcd(q,q'),$ we shall simply estimate $L_i(q,q')$ from above by
	\[
	L_i(q,q')\leq 2(2H+1)\min\left\{\frac{\psi(q)}{q},\frac{\psi(q')}{q'}\right\}\gcd(q,q')
	\]
	From here we removed the dependence of the inhomogeneous shifts $\gamma_1,\dots,\gamma_k.$ We now estimate $\sum_{1\leq q'\leq q} |B_q\cap B_{q'}|.$ We split the sum according to whether $\Delta(q,q')$ less or larger than $H\gcd(q,q').$ For the latter part, the upper bound is
	\[
	\sum_{1\leq q'\leq q, \Delta(q,q')\geq H\gcd(q,q')}  |B_q\cap B_{q'}|\leq \sum_{1\leq q'\leq q} \sum_{1\leq q'\leq q}\prod_{i=1}^k 4^k(1+C_0/(2H))^k\psi^k(q)\psi^k(q').\tag{I}
	\]
	For the former part, we have
	\begin{align*}
	\sum_{1\leq q'\leq q, \Delta(q,q')\leq H\gcd(q,q')}  |B_q\cap B_{q'}|&\leq& \sum_{1\leq q'\leq q} \left(2(2H+1)\min\left\{\frac{\psi(q)}{q},\frac{\psi(q')}{q'}\right\}\gcd(q,q')\right)^k\\
	&\leq& 2^k(2H+1)^k \sum_{1\leq q'\leq q} \frac{\psi(q)^k}{q^k} \gcd(q,q')^k\\ 
	&=& 2^k(2H+1)^k \frac{\psi(q)^k}{q^k}\sum_{r|q} r^k \sum_{1\leq q'\leq q, \gcd(q,q')=r} 1.\\
	&=& 2^k(2H+1)^k \frac{\psi(q)^k}{q^k}\sum_{r|q} r^k \phi(q/r). \tag{II}
	\end{align*}
	Suppose that $k\geq 3.$ We then use the trivial bound $\phi(q/r)\leq q/r$ to write (II) further as
	\[
	\leq 2^k (2H+1)^k\frac{\psi(q)^k}{q^k}\sum_{r|q} r^k \frac{q}{r}= 2^k(2H+1)^k \psi(q)^k\sum_{r|q} \left(\frac{r}{q}\right)^{k-1}\leq 2^k(2H+1)^k \psi(q)^k \zeta(k-1). \tag{II'}
	\]
	From here we see that for $k\geq 3,$
	\[
	\sum_{1\leq q'\leq q} |B_q\cap B_{q'}|\leq 4^k(1+C_0/(2H))^k \psi(q)^k (\sum_{1\leq q'\leq q} \psi(q')^k)+2^k(2H+1)^k\zeta(k-1) \psi(q)^k.
	\]
	From here, we can use Lemma \ref{Borel} to conclude that
	\[
	|\limsup_{q\to\infty} B_q|\geq \frac{1}{(1+C_0/(2H))^k}.
	\]
	This proves the result for $k\geq 3$ as $H$ can be chosen to be arbitrarily large.
	
	When $k=2,$ Estimate (II') fails as $\zeta(1)$ is not defined. We need to argue differently. We have the following estimate from (II) that for a constant $C>1,$
	\[
	2^2(2H+1)^2 \psi(q)^2 \sum_{r|q}\frac{1}{r}\leq 2^2 C \psi(q)^2(2H+1)^2 \frac{q}{\phi(q)}.
	\]
	Here we have used the fact that for a constant $C>1,$ for all $q\geq 1,$
	\[
	C^{-1}\frac{q}{\phi(q)}\leq \sum_{r|q} \frac{1}{r}\leq C\frac{q}{\phi(q)}.
	\]
	For each $l\geq 0,$ let $D_l$ be the set
	\[
	\{q: q/\phi(q)\in [2^l,2^{l+1}]\}.
	\]
	Denote the sum
	\[
	a_l=\sum_{q\in D_l} \psi(q)^2 \left(\frac{\phi(q)}{q}\right)^2.
	\]
	Suppose that $a_l=\infty$ for an integer $l\geq 0.$ In this case, we can just restrict $\psi$ to the set $D_l.$ In addition, we have  $q/\phi({q})\leq 2^{l+1}$  on $D_l.$ Then, by using the same argument as in the $k\geq 3$ case we see that
	\[
	|\limsup_{q\to\infty} B_q|\geq 1/(1+C_0/(2H))^2.
	\]
	Thus we assume that
	\[
	a_l<\infty
	\]
	for all $l\geq 0.$ The divergent condition for $\psi$ forces $\sum_l a_l=\infty.$ Now let $l$ be any integer. By Lemma \ref{EC}, we see that as long as at least one $B_q$ with $q\in D_l$ has positive measure, we have for all large enough $Q,$
	\[
	\mu_Q=|\cup_{q\leq Q, q\in D_l} B_q|\geq \frac{\left(\sum_{q=1}^{Q}{|B_q|}\right)^2}{\sum_{q,q'=1}^{Q}{|B_q \cap B_{q'}|}}.
	\]
	We see that
	\[
	\liminf_{q\to\infty} \mu_Q\geq \frac{1}{(1+C_0/(2H))^2+2^2(2H+1)^2 C 2^{l+1} \frac{1}{a_l \times 2^{2l}}}.\tag{III}
	\]
	Warning, this does not imply that $|\limsup_{q\to\infty} B_q|>0$! However, as $\sum_l a_l=\infty,$ there must exist arbitrarily large integers $l$ such that
	\[
	2^{l}a_l\geq 1000\times 2^3\times C\times (2H+1)^2.
	\]
	From here we see that there are arbitrarily large integers $Q$ such that
	\[
	\mu_Q\geq \frac{1-1000^{-1}}{(1+C_0/(2H))^2+1000^{-1}}.
	\]
	We fix a pair of $l,Q$ as in above and write $H_{l}=\cup_{q\leq Q, q\in D_l} B_q.$ We do not explicitly write $Q$ here. As a consequence, we can find sets $H_{l}$ with arbitrarily large $l$ (and corresponding $Q$) such that
	\[
	|H_l|\geq \frac{1-1000^{-1}}{(1+C_0/(2H))^2+1000^{-1}}.
	\]
	We can rename $H_l$ such that the index $l$ runs over $1,\dots,\infty.$ Notice that $F_n=\cup_{k\geq n} H_k$ is a decreasing sequence of Lebesgue measurable sets. This implies that (by the continuity of Lebesgue measure)
	\[
	|\limsup_{l\to\infty} H_l|=\left|\bigcap_{n\geq 1}\left(\bigcup_{k\geq n} H_k\right)\right|=|\cap_{n\geq 1} F_n|=\lim_{n\to\infty} |F_n|\geq \frac{1-1000^{-1}}{(1+C_0/(2H))^2+1000^{-1}}.
	\]
	Suppose that $x\in \limsup_{l\to\infty} H_l,$ then $x\in H_l$ for infinitely many $l.$ This implies that $x\in B_q$ for at least one $q\in D_l$ for infinitely many $l.$ This implies that $x\in B_q$ for infinitely many different $q.$ This finishes the proof for $k=2$(notice that the number $1000$ as well as $H$ in above can be replaced by any large numbers).
	
	Now let us assume that $k=1.$ In this case, (II) can be further bounded by
	\[
	\leq 2\psi(q) d(q).
	\]
	As we required $\psi(q)/(d(q))^{1+\epsilon}=\infty,$ we can perform the argument as in the $k=2$ case. We just discuss values of $d(q)$ instead of $q/\phi(q)$. The extra $+\epsilon$ on the exponent will help us to find an estimate like (III) where the coefficient $a_l$ is $2^{(1+\epsilon)l}.$ From here the rest of the arguments can be performed without essential changes. This concludes the proof.
	\subsection*{Acknowledgements.}
	HY was financially supported by the University of Cambridge and the Corpus Christi College, Cambridge. HY has received funding from the European Research Council (ERC) under the European Union’s Horizon 2020 research and innovation programme (grant agreement No. 803711). HY thanks S. Chow for helpful comments.

	\Addresses
	
\end{document}